\DeclareMathOperator*{\argmin}{argmin}
\newcommand\vectorize{\mathord{\mathrm{vec}}}
\newcommand{\norm}[1]{\left\lVert#1\right\rVert}
\let\oldnl\nl
\newcommand{\nonl}{\renewcommand{\nl}{\let\nl\oldnl}}
\newcommand\numberthis{\addtocounter{equation}{1}\tag{\theequation}}
  \pgfplotsset{
    compat=newest,
    tick label style={font=\scriptsize},
    label style={font=\scriptsize},
    legend style={font=\scriptsize}
  }
     \renewcommand{\tikzsetnextfilename}[1]{}
\title{Infinite GMRES for parameterized linear systems}
\author{Elias Jarlebring,
Siobhán Correnty\thanks{Department of Mathematics, Royal Institute of Technology (KTH), Stockholm, SeRC Swedish
e-Science Research Center, email: 
\{\texttt{eliasj,correnty}\}\texttt{@kth.se}}}
\begin{document}

\maketitle
\begin{abstract}
We consider linear parameter-dependent systems $A(\mu) x(\mu) = b$ for many different $\mu$, where $A$ is large and sparse, and depends nonlinearly on $\mu$. Solving such systems individually for each $\mu$ would require great computational effort. In this work we propose to compute a partial parameterization $\tilde{x} \approx x(\mu)$ where $\tilde{x}(\mu)$ is cheap to compute for many different $\mu$. Our methods are based on the observation that a companion linearization can be formed where the dependence on $\mu$ is only linear. In particular, we develop methods which combine the well-established Krylov subspace method for linear systems, GMRES, with algorithms for nonlinear eigenvalue problems (NEPs) to generate a basis for the Krylov subspace. Within this new approach, the basis matrix is constructed in three different ways, using a tensor structure and exploiting that certain problems have low-rank properties. We show convergence factor bounds obtained similarly to those for the method GMRES for linear systems. More specifically, a bound is obtained based on the magnitude of the parameter $\mu$ and the spectrum of the linear companion matrix, which corresponds to the reciprocal solutions to the corresponding NEP. Numerical experiments illustrate the competitiveness of our methods for large-scale problems. The simulations are reproducible and publicly available online.
\end{abstract}

\begin{keywords}
parameter-dependent linear systems, Krylov methods, companion linearization, shifted linear systems, infinite Arnoldi, low-rank
\end{keywords}

\section{Introduction} \label{sec:introduction}
We are interested in the numerical solution of the large complex linear system
\begin{align} \label{eq:our-problem}
	A (\mu) x (\mu) = b,
\end{align}
with $A(\mu) \in \CC^{n \times n}$ analytic, $A(0)$ nonsingular and $b \in \CC^n$, for many possible values of $\mu \in \CC \backslash \{ 0 \}$.

Solving large linear systems of equations in an efficient manner is a topic of great importance in most scientific fields involving comxputation. Linear systems often arise from the application of the finite element method (FEM) to a partial differential equation (PDE), leading to a large and sparse matrix $A$. Since a finer FEM-discretization leads to higher accuracy and better modelling possibilities but also to larger systems, improving efficiency, accuracy and robustness of methods is critical \cite{Estep1996ComputationalDE}.

We consider problems where there is a nonlinear dependence on a parameter, yet we are interested in obtaining solutions for many different $\mu$ simultaneously. Such problems arise naturally in many situations; for example in
the study of PDEs with uncertainty, a parameter-dependent system which we
will consider
is the Helmholtz equation with a parameterized material coefficient.
Since the parameter $\mu$ is not known in advance, the problem can be seen as an uncertainty quantification problem in the sense of \cite{UncertCompTensRep}.
Problems of this type can also appear in the context of model reduction, where $\mu$ is usually the Laplace variable. We present an example in this direction
in Section~\ref{sec:TDS}. See \cite{Benner:2021:MOR} and  \cite{Benner2015UncertaintyQF} for samples of applications and literature on model order reduction.
In contrast to many model reduction techniques, our setting does not lead to
a parameterization which is of the same structure as the original problem, but we aim to obtain any computationally cheap parameterization of the solution vector. This freedom allows
us to completely generalize iterative methods for linear systems.

The new methods we present are based on the well-established GMRES method  for linear systems \cite{Saad:1986:GMRES}. The main idea of our approach can be summarized as follows.
For our derivation we define a new function
\begin{align} \label{eq:Bdef}
  B(\mu):=\frac{1}{\mu}A(0)^{-1}\big( A(0)-A(\mu) \big)
\end{align}
such that \eqref{eq:our-problem} can equivalently be written as
\begin{align}
  \big(\mu B(\mu)-I \big) x(\mu)=-A(0)^{-1}b.
  \label{eq:we-solve}
\end{align}
Let $B_N$ be the truncated Taylor series expansion of $B$:
\begin{align}
   B_N(\mu) = \frac{1}{0!} B_0 + \frac{1}{1!} B_1 \mu + \frac{1}{2!} B_2 \mu^2 + \dots + \frac{1}{N!} B_N \mu^N,
   \label{eq:B-of-mu}
\end{align}
where $B_i = -\frac{1}{i+1}A(0)^{-1} A^{(i+1)}(0) \in \CC^{n \times n}$. In order to handle the nonlinearity in \eqref{eq:we-solve}, we use a technique called companion linearization which is very commonly used to analyze polynomial eigenvalue problems (PEPs).
As we show in  Theorem~\ref{solution}, equation \eqref{eq:we-solve} is equivalent to the following linear system where $\mu$ only appears linearly:
\begin{align} \label{eq:Blinsys}
(\mu \mathbf{B}_N - I) v(\mu) = c.
\end{align}
The constant matrix $\mathbf{B}_N$ and constant vector $c$ are defined as
\begin{align}
    \mathbf{B}_N :=
    \begin{bmatrix}
    B_0&  \cdots&\cdots& \cdots& B_N \\
    I&&&&0\\
    &\frac{1}{2}I&&&\vdots\\
     &&\ddots&&\vdots\\
          &&&\frac{1}{N}I&0\\
    \end{bmatrix} \in \CC^{(N+1)n \times (N+1)n}
    \label{comp_matrix}
\end{align}
and
\begin{align}
  c:= -e_1\otimes A(0)^{-1} b  \in \CC^{(N+1)n},
  \label{eq:RHS}
\end{align}
where $\otimes$ denotes the Kronecker product, and $e_1$ is the
first unit vector.
In Section~\ref{sec:linearization} we completely describe this equivalence,
analogous to the theory of PEPs, where $\mathbf{B}_N$ is
called a companion matrix.

If we apply GMRES on the linear system \eqref{eq:we-solve}, we need to build the Krylov subspace
\begin{align} \label{eq:our-krylov}
	\mathcal{K}_k (\mathbf{B}_N, c) := \text{span} \{c, \mathbf{B}_N c, \ldots, 	\mathbf{B}_N^{k-1} c \}.
\end{align}
The same matrix $\mathbf{B}_N$  appears frequently in the field of nonlinear eigenvalue problems (NEPs),
where the same Krylov subspace \eqref{eq:our-krylov}
is used to construct numerical methods.
The Krylov subspace has a particular structure which
has been heavily exploited for NEP methods, e.g. the infinite Arnoldi method \cite{Jarlebring:2012:INFARNOLDI} and various improvements thereof \cite{doi:10.1002/nla.2043,Jarlebring:2014:SCHUR,DelayProblem,TensorArnoldi,Mele2018}. We will now show that many of the same techniques can be applied here.
In particular,
we can let $N\rightarrow \infty$ but still carry out the algorithm
with a finite number of
linear algebra operations without truncation error in $B_N(\mu)$.

To our knowledge, this work contains the first result that exploits the connection between linear systems and NEPs in a way that allows the generalization
of iterative methods for linear systems to parameterized linear systems.
Although we have focused on the flavors of the NEP-method
infinite Arnoldi method, there are many algorithms for NEPs that may also lead to competitive approaches, e.g., CORK \cite{VanBeeumen:2015:CORK} or TOAR \cite{Kressner:2014:TOAR}.

Krylov subspaces are invariant under shifts, a property previously shown to be useful in other works on parameterized linear systems \cite{Baumann2015NestedKM, GuSimoncini, KressnerToblerLR, etna_vol45_pp499-523,SOODHALTER2014105,Bahkos:2017:MULTIPREC}. Our linearization allows us to consider just one Krylov subspace formed independently of $\mu$ by a linear combination of the power sequence associated with the matrix $\mathbf{B}_N$ and the vector $c$ as in \eqref{eq:our-krylov}. We reuse the associated basis matrix to compute $\tilde{x}(\mu)$ for all values of $\mu$, solving a least squares problem for every value of $\mu$.

The first new method we propose in this paper considers an efficient way of handling the Krylov basis matrix as in \cite{TensorArnoldi} while the second is specifically designed to handle $A(\mu)$ with higher order terms of reduced rank, ultimately allowing for a more efficient construction of the Krylov basis matrix analogous to the low-rank \cite{doi:10.1002/nla.2043}. We provide convergence theory and show that convergence factor bounds can be obtained in a way similar to the linear case. Eigenvalue based bounds are derived from the solution of NEPs. 

The paper is organized as follows. In the next section, we explain the linearization we used and prove that we can easily recover the solution we seek from this linearization. In Section~\ref{sec:algorithms} we present our new methods, Infinite GMRES and Low-Rank Infinite GMRES. In Section~\ref{sec:convergence} we show our convergence theory and in Section~\ref{sec:simulations} we provide numerical examples which illustrate the theory.

\section{Companion Linearization} \label{sec:linearization}
Companion linearizations has been extensively used for PEPs
(see, e.g.,  \cite{Mackey:2006:VECT}) but also
for linear systems \cite{GuSimoncini}.
We now show how companion linearization can be applied leading to
a matrix structure that is also present in the infinite Arnoldi method,
where it was be used to dynamically expand the linearization, further explained
 in Section~\ref{sec:algorithms}.
Let $B_N(\mu)$ be the truncated Taylor expansion of $B(\mu)$ as in \eqref{eq:B-of-mu} and denote
\begin{align} \label{eq:D-mat}
D_{N,N+1} :=
	\begin{bmatrix}
	1 & & & & 0 \\
	& \frac{1}{2} & & & \vdots \\
	& & \ddots & & \vdots \\
	& & & \frac{1}{N} & 0
	\end{bmatrix}
	\in \RR^{N \times (N+1)}.
\end{align}
The companion linearization can be explicitly expressed as follows.
\begin{theorem} \label{solution}
  Suppose a given parameter-dependent matrix $A(\mu)$ is as in \eqref{eq:our-problem}.
  Let $B_N(\mu)$ be a truncated expansion of $B$ as in \eqref{eq:B-of-mu}.
  For any $\mu \in \CC \backslash \{ 0 \}$, consider the
  linear system \eqref{eq:Blinsys}, where $\mathbf{B}_N$ as in \eqref{comp_matrix}, $c$ as in \eqref{eq:RHS} and
\begin{align*}
	 v(\mu) := [v_0(\mu),\ldots,v_{N}(\mu)]^T \in \CC^{(N+1)n}.
\end{align*}
Let $A_N(\mu)$ refer to the Taylor series expansion of $A(\mu)$ truncated after $(N+1)$ terms.

Then, the linear system \eqref{eq:Blinsys} and
    \begin{align}
    A_N(\mu)x_N(\mu)=b
    \label{eq:ANrelation}
\end{align}
are equivalent in the following sense:
\begin{itemize}
    \item[(a)] Solutions to the linear system \eqref{eq:Blinsys} are of the form $v_i(\mu)=\frac{\mu^i}{i!} x_N(\mu)$, $i=0,\ldots,N$, where $x_N(\mu) \in \CC^n$ satisfies \eqref{eq:ANrelation}
    \item[(b)] Let $x_N(\mu) \in \CC^n$ be a solution to \eqref{eq:ANrelation}, then $v_i(\mu):=\frac{\mu^i}{i!} x_N(\mu)$ is a solution to \eqref{eq:Blinsys}
\end{itemize}
\end{theorem}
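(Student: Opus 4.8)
The plan is to exploit the block structure of \eqref{eq:Blinsys} to eliminate every block of $v(\mu)$ except the first, reducing the $(N+1)n$-dimensional system to a single $n$-dimensional equation that can be matched against \eqref{eq:ANrelation}.

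First I would write out $(\mu\mathbf{B}_N - I)v(\mu) = c$ as its $N+1$ block rows, using the explicit forms \eqref{comp_matrix} and \eqref{eq:RHS}. The subdiagonal blocks $\frac{1}{i}I$ make block rows $1,\ldots,N$ decouple into the one-term recurrences $\frac{\mu}{i}v_{i-1}(\mu) - v_i(\mu) = 0$, which telescope to $v_i(\mu) = \frac{\mu^i}{i!}v_0(\mu)$. This already produces the asserted shape in (a) and (b); all the remaining content sits in the top block row.

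Next I would substitute $v_j = \frac{\mu^j}{j!}v_0$ into the first block row $\mu\sum_{j=0}^N B_j v_j(\mu) - v_0(\mu) = -A(0)^{-1}b$. The sum collapses to $\mu\big(\sum_{j=0}^N \frac{1}{j!}B_j\mu^j\big)v_0 = \mu B_N(\mu)v_0$, so the top row becomes the reduced equation $(\mu B_N(\mu) - I)v_0(\mu) = -A(0)^{-1}b$. The crux is a coefficient-matching identity tying this to \eqref{eq:ANrelation}: inserting $\frac{1}{i!}B_i = -\frac{1}{(i+1)!}A(0)^{-1}A^{(i+1)}(0)$ from \eqref{eq:B-of-mu} and shifting the index by $k=i+1$ gives $\mu B_N(\mu) = -A(0)^{-1}\sum_{k=1}^{N+1}\frac{1}{k!}A^{(k)}(0)\mu^k$, and folding the $-I = -A(0)^{-1}A(0)$ term into the $k=0$ slot yields $\mu B_N(\mu) - I = -A(0)^{-1}A_N(\mu)$. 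Left-multiplying the reduced equation by $-A(0)$, which is admissible since $A(0)$ is nonsingular by hypothesis, turns it into $A_N(\mu)v_0(\mu) = b$, i.e.\ \eqref{eq:ANrelation} with $x_N(\mu) = v_0(\mu)$; this is direction (a). For (b) I would run the chain backwards: starting from a solution $x_N(\mu)$ of \eqref{eq:ANrelation}, set $v_i(\mu) := \frac{\mu^i}{i!}x_N(\mu)$, observe that rows $1,\ldots,N$ hold by construction and that the top row holds by the same identity, so the assembled vector solves \eqref{eq:Blinsys}.

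I expect the only delicate point to be the bookkeeping in the coefficient-matching identity — in particular the degree shift that carries the $B$-expansion on powers $\mu^0,\ldots,\mu^N$ to an $A$-expansion reaching $\mu^{N+1}$, so that the truncation level of $A_N$ must be chosen to match exactly the powers generated by $\mu B_N(\mu)$ with no leftover term. Once that identity is in place, everything reduces to linear bookkeeping, and the nonsingularity of $A(0)$ is what upgrades the relation from a mere implication to the genuine two-way equivalence claimed in (a) and (b).
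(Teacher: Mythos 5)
Your proof is correct and rests on the same decomposition as the paper's: separate the first block row of \eqref{eq:Blinsys} from the subdiagonal block rows, and match the first row against \eqref{eq:ANrelation}. The differences lie in how the two halves are handled, and they work in your favor. For the subdiagonal rows, the paper (in its proof of part (b)) verifies the candidate solution through the Kronecker-product identity $\left((\mu D_{N,N+1}-S)\otimes I\right)(\bar{\mu}\otimes x_N(\mu))=0$, which is a one-way check; you instead read those rows as the recurrence $v_i(\mu)=\frac{\mu}{i}v_{i-1}(\mu)$ and telescope, which simultaneously proves that \emph{every} solution of \eqref{eq:Blinsys} has the form $v_i(\mu)=\frac{\mu^i}{i!}v_0(\mu)$. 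This closes a genuine gap in the paper's proof of part (a), which begins by \emph{assuming} the solution already has that form and only then inspects the first block row --- it never derives the form, which is the actual content of the claim that solutions ``are of the form'' stated in (a). For the first row, your explicit coefficient-matching identity $\mu B_N(\mu)-I=-A(0)^{-1}A_N(\mu)$ (with the index shift carrying the degree-$N$ polynomial $B_N$ to the degree-$(N+1)$ polynomial $A_N$, correctly resolving the paper's slightly imprecise phrase ``truncated after $(N+1)$ terms''), combined with the nonsingularity of $A(0)$, makes the passage to \eqref{eq:ANrelation} a reversible equivalence; the paper performs essentially the same computation inline, once in each direction (via \eqref{eq:we-solve} in part (b)). Net effect: same skeleton, but your write-up is a genuine two-way argument and is more complete than the paper's own proof of (a).
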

\begin{proof}
a) We assume \eqref{eq:Blinsys} holds where $v_i(\mu)=\frac{\mu^i}{i!} x_N(\mu)$ and we look at the first line in this matrix vector product:
\begin{align*}
    -A(0)^{-1}b &= \mu B_0 \frac{\mu^0}{0!} x_N(\mu) + \mu B_1 \frac{\mu^1}{1!} x_N(\mu) + \cdots + \mu B_N \frac{\mu^N}{N!} x_N(\mu) - \frac{\mu^0}{0!} x_N(\mu) \\
    &= \big( \mu B_N(\mu) - I \big) x_N(\mu) \\
    &= \left( \mu \Big( \frac{1}{\mu} A(0)^{-1} \big( A(0) - A_N(\mu) \big) \Big) - I \right) x_N(\mu).
\end{align*}
Therefore, \eqref{eq:ANrelation} holds.

b) From \eqref{eq:ANrelation} we directly conclude that
\begin{align}
        \left(
        \mu
        \begin{bmatrix}
        B_0 & B_1 & \cdots & \cdots & B_N \\
        I & & & & 0 \\
        & \frac{1}{2} I & & & \vdots \\
        & & \ddots & & \vdots \\
        & & & \frac{1}{N} I & 0 \\
        \end{bmatrix}
        -
        \begin{bmatrix}
        I & & & & \\
        & \ddots & & & \\
        & & \ddots & & \\
        & & & \ddots & \\
        & & & & I \\
        \end{bmatrix}
        \right)
        \begin{bmatrix}
        \frac{\mu^0}{0!} x_N(\mu) \\
        \vdots \\
        \vdots \\
        \vdots \\
        \frac{\mu^N}{N!} x_N(\mu) \\
        \end{bmatrix}
        =
        \begin{bmatrix}
        -A(0)^{-1}b \\
        0 \\
        \vdots \\
        \vdots \\
        0
        \end{bmatrix}.
        \label{eq:b_equation}
\end{align}
Block rows $2,3,\ldots,N+1$ above can be rewritten with Kronecker products and then simplified as follows. By defining
\begin{align*}
    S &:=
    \begin{bmatrix}
    0 & 1 & & \\
	& \ddots & \ddots & \\
	& & 0 & 1 \\
    \end{bmatrix} \in \RR^{N \times (N+1)}, \\
    \Bar{\mu} &:= [\frac{ \mu^0}{0!}, \frac{ \mu^1}{1!}, \ldots, \frac{ \mu^N}{N!}]^T \in \RR^{N+1}
\end{align*}
we have
\begin{align*}
    \left( (\mu D_{N,N+1} - S) \otimes I \right)
    (\Bar{\mu}
    \otimes x_N(\mu)) &=
    (\mu D_{N,N+1} - S) \Bar{\mu} \otimes x_N(\mu) = 0,
\end{align*}
since $(\mu D_{N,N+1} - S) \Bar{\mu} = 0$, which corresponds to rows $2,\ldots,N+1$ in \eqref{eq:Blinsys}.

To show that the first $n$ equations in \eqref{eq:Blinsys} are satisfied, we now consider the first block row of \eqref{eq:b_equation}. The difference between the left-hand side and the right-hand side gives us
\begin{align*}
    \mu \Big(\frac{ \mu^0}{0!} B_0 + \ldots + \frac{\mu^N}{N!} B_N \Big) x(\mu) - x(\mu) + A(0)^{-1}b &= \Big( \mu B_N (\mu) - I \Big) x_N(\mu) + A(0)^{-1}b = 0
\end{align*}
due to \eqref{eq:we-solve}.
\end{proof}

\section{Algorithms} \label{sec:algorithms}
\subsection{GMRES for the shifted system} \label{sec:GMRES-background}
As a preparation for the algorithm derivation, we consider a shifted parameter-dependent linear equation system given generally as
\begin{align}
    (\mu C - I) x(\mu) = b,
    \label{eq:transformed_problem}
\end{align}
where $C \in \CC^{p \times p}$, $\mu \in \CC \backslash \{ 0 \}$,  $x(\mu) \in \CC^{p}$ and $b \in \CC^{p}$. GMRES for this type of shifted systems is derived from the standard GMRES method for solving $Cx=b$  summarized below;
see, e.g. \cite{Saad:1986:GMRES}.

On the $m$-th iteration of GMRES, we form an Arnoldi factorization, consisting of matrices $Q_m \in \CC^{p \times m}$, $Q_{m+1} \in \CC^{p \times (m+1)}$ and $\underline{H}_m \in \CC^{(m+1) \times m}$ that satisfy
\begin{align*}
    C Q_m = Q_{m+1} \underline{H}_m,
\end{align*}
where $\underline{H}_m$ is an upper Hessenberg matrix and $Q_m = (q_1,\ldots,q_m) \in \CC^{p \times m}$ is a matrix whose columns form an orthonormal basis for the Krylov subspace of dimension $m$ associated with matrix $C$ and vector $b$, defined as
\begin{align*}
    \mathcal{K}_m (C,b) := \text{span} \{b, C b, \dots ,C^{m-1} b \}.
\end{align*}
In practice, we perform one matrix vector product, $y = C q_m$ with $q_1 = b/ \norm{b}$, and orthogonalize this vector against the columns of $Q_m$ by a Gram-Schmidt process. This new vector is used to form $Q_{m+1}$. The orthogonalization coefficients are stored in $\underline{H}_m$.

Due to the shift-invariance property of Krylov subspaces, we have
\begin{align*}
    \mathcal{K}_m (\mu C-I,b) = \mathcal{K}_m (C,b).
\end{align*}
Thus, we can directly form an Arnoldi factorization for $\mu C - I$:
\begin{align*}
	\big(\mu C - I \big) Q_m = Q_{m+1}  \big( \mu \underline{H}_m - \underline{I}_m \big).
\end{align*}
This is essentially the relation pointed out in \cite[Equation~(2.4)]{Baumann2015NestedKM}.
Therefore, the $m$-th iterate of GMRES for \eqref{eq:transformed_problem} is found by solving the shifted least squares problem
\begin{align}
    x_m(\mu) = Q_m \big( \argmin_{x \in \CC^m} \norm{ \big( \mu \underline{H}_m - \underline{I}_m \big) x - e_1 \norm{b} } \big)
    \label{eq:least-squares}
\end{align}
where $\underline{I}_m$ is the identity matrix of size $m \times m$ with an extra row of zeros added at the bottom. This least squares problem is equivalent to finding the vector $x_m(\mu) \in \mathcal{K}_m(C,b)$ which minimizes the residual of \eqref{eq:transformed_problem}. Thus, solving \eqref{eq:transformed_problem} for a set $\boldsymbol{\mu} = \{ \mu_i \}_{i=1}^j $ reduces to constructing one Arnoldi factorization for the entire set, followed by a least squares problem for each $\mu_i$.

The overdetetermined \eqref{eq:least-squares} is computationally cheap since it is small in comparison to the size of the original problem. In practice \eqref{eq:least-squares} can be computed using Givens rotators as the matrix is a Hessenberg matrix, as pointed out e.g. in \cite{Saad:1986:GMRES}. We leave out the Givens rotator in our algorithms below since other parts of the algorithm are computationally dominating, although it could also be used here.

\subsection{An extension to infinity and infinite vs. finite}  \label{sec:build-krylov}

The Arnoldi method is efficient only if we can compute the corresponding matrix vector products efficiently. We consider a special representation of this product using the linearization in \eqref{eq:Blinsys}. This is completely
analogous as the infinite Arnoldi method \cite{Jarlebring:2012:INFARNOLDI}, and the proofs in this section are omitted for brevity.
Denote the $i$-th column of matrix $X$ by $x_i \in \CC^n$ and the operation of stacking the columns of a matrix into a vector by $\text{vec}(X) := (x_1^T, x_2^T, \dots , x_{N+1}^T)^T$.

\begin{lemma}
Let $\mathbf{B}_N$ be as in \eqref{comp_matrix}. For any $X \in \CC^{n \times (N+1)}$,
\begin{align*}
    \mathbf{B}_N \vectorize(X) = \vectorize(\tilde{x}, X D_{N+1,N})
\end{align*}
where
\begin{align*}
    \tilde{x} = -A(0)^{-1} \left(\sum_{i=1}^{N+1} \frac{1}{i} A^{(i)}(0) x_i \right).
\end{align*}
\end{lemma}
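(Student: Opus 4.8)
The plan is to prove the identity by a direct, block-by-block computation, viewing both sides as vectors partitioned into $N+1$ blocks of size $n$. First I would write $\vectorize(X) = (x_1^T, \ldots, x_{N+1}^T)^T$, where $x_i$ is the $i$-th column of $X$, and read off the block structure of $\mathbf{B}_N$ from \eqref{comp_matrix}: the first block row is $[B_0, \ldots, B_N]$, while for $k = 1, \ldots, N$ the $(k+1)$-th block row has its only nonzero block, $\tfrac{1}{k}I$, in block column $k$. It then suffices to compute each block of $\mathbf{B}_N \vectorize(X)$ and match it against the corresponding block of $\vectorize(\tilde{x}, X D_{N+1,N})$.

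For the first block I would evaluate $\sum_{j=0}^{N} B_j x_{j+1}$ and substitute $B_j = -\tfrac{1}{j+1}A(0)^{-1}A^{(j+1)}(0)$. The substitution $i = j+1$ converts this into $-A(0)^{-1}\sum_{i=1}^{N+1}\tfrac{1}{i}A^{(i)}(0)x_i = \tilde{x}$, which is precisely the stated first block. The remaining blocks are immediate: since block row $k+1$ contains only $\tfrac{1}{k}I$ in column $k$, the $(k+1)$-th block of the product is $\tfrac{1}{k}I\,x_k = \tfrac{1}{k}x_k$ for $k = 1, \ldots, N$. Thus $\mathbf{B}_N \vectorize(X)$ has the block list $(\tilde{x},\, x_1,\, \tfrac{1}{2}x_2,\, \ldots,\, \tfrac{1}{N}x_N)$.

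To finish I would identify the right-hand side. The matrix $D_{N+1,N} \in \RR^{(N+1)\times N}$ (the $(N+1)\times N$ analogue of $D_{N,N+1}$, equivalently its transpose) has $\tfrac{1}{k}e_k$ as its $k$-th column, so the $k$-th column of $X D_{N+1,N}$ is $\tfrac{1}{k}x_k$, giving $X D_{N+1,N} = [x_1,\, \tfrac{1}{2}x_2,\, \ldots,\, \tfrac{1}{N}x_N]$. Prepending the column $\tilde{x}$ and applying $\vectorize$ reproduces exactly the block list found above, which establishes the claimed equality.

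The only delicate point is the index bookkeeping in the first block: the companion blocks carry the derivative shift $A^{(j+1)}$ and the factor $\tfrac{1}{j+1}$, whereas the target sum defining $\tilde{x}$ is written with $A^{(i)}$ and $\tfrac{1}{i}$, so the re-indexing $i = j+1$ is where an off-by-one slip would occur. Everything else is a transparent matching of blocks, with the recognition that $X D_{N+1,N}$ realizes the subdiagonal scalings $\tfrac{1}{k}$ being the one small observation that makes the compact vectorized form come out.
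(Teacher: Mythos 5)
Your proof is correct, and it is essentially the argument the paper has in mind: the paper omits the proof (citing the analogy with the infinite Arnoldi method), and the omitted argument is precisely this direct block-by-block verification using the companion structure of $\mathbf{B}_N$, the definition $B_j = -\tfrac{1}{j+1}A(0)^{-1}A^{(j+1)}(0)$ with the re-indexing $i=j+1$, and the reading of $D_{N+1,N}$ as the transpose of $D_{N,N+1}$ so that $XD_{N+1,N}$ collects the scaled columns $\tfrac{1}{k}x_k$.
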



As we mentioned in section~\ref{sec:GMRES-background}, the method GMRES forms the Arnoldi factorization with the right-hand side vector. In the case of solving \eqref{eq:Blinsys}, the right-hand side vector \eqref{eq:RHS} has only $n$ non-zero entries, located in the first block. This corresponding matrix vector product can be represented in a specific way. We consider the following theorem.
\begin{theorem} \label{matvec}
Let $\mathbf{B}_N$ be as in \eqref{comp_matrix}. Suppose
\begin{align*}
    X = (\hat{X},0, \dots ,0) \in \CC^{n \times (N+1)}
\end{align*}
and $\hat{X} \in \CC^{n \times k}$, $k < N$. Then,
\begin{align*}
    \mathbf{B}_N \vectorize{(X)} = \vectorize{(\tilde{x}, \hat{X} D_{k,k},0, \dots ,0)},
\end{align*}
where
\begin{align} \label{eq:y-tilde}
    \tilde{x} = -A(0)^{-1} \big( \sum_{i=1}^k \frac{1}{i} A^{(i)}(0) x_i \big).
\end{align}
\end{theorem}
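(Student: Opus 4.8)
The plan is to obtain the statement as a direct specialization of the preceding Lemma, exploiting the sparsity of $X$. Applying that Lemma to $X = (\hat{X}, 0, \dots, 0)$ immediately gives $\mathbf{B}_N \vectorize(X) = \vectorize(\tilde{x}, X D_{N+1,N})$ with $\tilde{x} = -A(0)^{-1}\sum_{i=1}^{N+1}\frac{1}{i}A^{(i)}(0)x_i$, so the whole argument reduces to simplifying the two outputs $\tilde{x}$ and $X D_{N+1,N}$ under the hypothesis that the columns $x_{k+1}, \dots, x_{N+1}$ vanish.

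First I would simplify $\tilde{x}$. Since $x_i = 0$ for $i > k$, every term in the sum with $i > k$ drops out, so the sum collapses to $\sum_{i=1}^{k}\frac{1}{i}A^{(i)}(0)x_i$, which is precisely \eqref{eq:y-tilde}. Next I would simplify the trailing block $X D_{N+1,N}$. Because $D_{N+1,N}$ acts by scaling column $j$ of $X$ by $1/j$ and discarding the last column, its $j$-th column equals $\frac{1}{j}x_j$; for $j \le k$ these are exactly the columns of $\hat{X}D_{k,k}$, while for $k < j \le N$ they vanish since $x_j = 0$. Hence $X D_{N+1,N} = (\hat{X} D_{k,k}, 0, \dots, 0)$, and substituting into $\vectorize(\tilde{x}, X D_{N+1,N})$ yields the claimed expression.

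Equivalently, one could argue directly from the block form \eqref{comp_matrix} without invoking the Lemma: the first block row returns $\sum_{i=0}^{N}B_i x_{i+1}$, and using $B_i = -\frac{1}{i+1}A(0)^{-1}A^{(i+1)}(0)$ together with the vanishing of the later columns recovers $\tilde{x}$, while the subdiagonal blocks $\frac{1}{i}I$ map input block $i$ to output block $i+1$, so the nonzero blocks $x_1, \dots, x_k$ reappear as $\frac{1}{1}x_1, \dots, \frac{1}{k}x_k$ and everything beyond block $k+1$ is zero.

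The computation is entirely routine; the only point demanding care is the bookkeeping. One must track the index shift between the companion blocks $B_i$ and the derivatives $A^{(i+1)}(0)$, and confirm that the hypothesis $k < N$ guarantees that the single column dropped by $D_{N+1,N}$, namely $x_{N+1}$, is indeed zero, so that no information is lost. This same inequality shows that the support of the output grows by exactly one block, from $k$ to $k+1$, while remaining strictly within the $N+1$ available blocks; this is the observation that later enables the extension $N \to \infty$ and the finite representation of the matrix-vector product in the algorithm.
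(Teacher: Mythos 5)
Your proposal is correct. Note that the paper does not actually supply a proof of this theorem: it states that the proofs in this section are ``omitted for brevity,'' deferring to the analogy with the infinite Arnoldi method, so there is no written argument to compare against. Your derivation --- specializing the preceding Lemma to $X=(\hat{X},0,\dots,0)$ so that the sum in $\tilde{x}$ truncates at $k$ and $XD_{N+1,N}$ collapses to $(\hat{X}D_{k,k},0,\dots,0)$, or equivalently reading the result directly off the block structure of $\mathbf{B}_N$ --- is exactly the routine verification the authors intend, and your bookkeeping (the shift between $B_i$ and $A^{(i+1)}(0)$, and the role of $k<N$ in ensuring the dropped column $x_{N+1}$ vanishes and the support grows by exactly one block) is handled correctly.
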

A direct result of Theorem~\ref{matvec} is the structure of the resulting vector is changed only by an expanding the number of non-zero entries in the first block and the number of floating point operations to compute $\mathbf{B}_N \vectorize{(\hat{X},0,...,0)}$ is independent of the number of zero elements. Hence, we can take the product of a vector with an infinite tail of zeros with an infinite companion matrix in a finite number of linear algebra operations, thus representing the Taylor series \eqref{eq:B-of-mu} without any truncation error.

Let $X_1,\dots,X_m$ be the matrix version of the vectors after $m$ iterations of the Arnoldi method. We note that the tailing zeros of the new vector $\mathbf{B}_N \vectorize{(X_m)}$ are preserved after orthogonalization against $X_1, \dots ,X_m$. As the Arnoldi method consists of just these operations, the method described above is suitable.

\subsection{Generalizations of Infinite Arnoldi method}
Based on the results presented in Section~\ref{sec:build-krylov}
we can directly state a generalization of GMRES, summarized in  Algorithm~\ref{alg:inf_gmres}.
This algorithm consists of an Arnoldi factorization, where the matrix vector products are performed as in Section~\ref{sec:build-krylov}. The resulting vectors are orthogonalized by a Gram-Schmidt process. The approximate solution is the result of a least squares problem as described in \eqref{eq:least-squares}. The error from this method thus comes entirely from GMRES, i.e., we extend the expansion to infinity without approximation error. We refer to this method as infinite GMRES. The complete algorithm is included in Algorithm~\ref{alg:inf_gmres}.

\subsubsection{Low-rank Infinite GMRES}\label{sect:LR_infgmres}
The infinite Arnoldi method as presented in \cite{Jarlebring:2012:INFARNOLDI}
has been extended and improved in various ways. We illustrate
how the low-rank exploitation \cite{doi:10.1002/nla.2043} can be adapted to this setting.
Suppose now that $A(\mu) \in \CC^{n \times n}$ analytic and the higher order terms are of reduced rank, i.e.,
\begin{align}
    A(\mu) &= \sum_{i=0}^s \frac{1}{i!} A_i \mu^i + \sum_{i=s+1}^{\infty} \frac{1}{i!} U_i V^T \mu^i,
    \label{eq:A_reduced_rank}
\end{align}
where $A_i = A^{(i)}(0) \in \CC^{n \times n}$, $U_i = U F^{(i)}(0) \in \CC^{n \times p}$, $U,V\in\CC^{n\times p}$, and $F^{(i)}(0) \in \CC^{p \times p}$.

We can approximate the solution to \eqref{eq:our-problem} using our linearization as in \eqref{eq:Blinsys}, exploiting this structure in particular.
\begin{corollary}
Suppose the higher order terms of $B(\mu)$ as in \eqref{eq:Bdef} are of reduced rank and denote
\begin{align*}
    B_i
    = -\frac{1}{i+1} A_0^{-1} \big(U_{i+1} V^T \big)
    = \tilde{U}_{i} V^T, i = s,s+1,\ldots.
\end{align*}
If \eqref{eq:Blinsys} holds, then
\begin{align}\label{eq:lowrank_sys}
    \big( \mu
    \tilde{\mathbf{B}}_N -
    I \big)
    \tilde{v}(\mu)
    =
   c,
\end{align}
where
\begin{align} \label{eq:Bn-tilde} 
\tilde{\mathbf{B}}_N &:=    \begin{bmatrix}
    B_0 & \cdots &\cdots&B_{s-1} & \tilde{U}_{s} & \cdots & \cdots &\tilde{U}_{N} \\
    \frac{1}{1}I & & & & \\
    &\ddots & & & & \\
    &&\frac{1}{s-1}I & & & & \\
     &&& \frac{1}{s} V^T & & & & \\
     &&& & \frac{1}{s+1} I_p & & & \\
     & &&& & \ddots & & \\
     & &&& & & \frac{1}{N} I_p & 0
    \end{bmatrix}  
\end{align}
and
\begin{align*}
\tilde{v}(\mu) &:=
    \left[
    \frac{\mu^{0}}{0!} x(\mu),
    \cdots,
    \frac{\mu^{s-1}}{(s-1)!} x(\mu),
    \frac{\mu^{s}}{s!} V^T x(\mu),
    \cdots,
    \frac{\mu^N}{N!} V^T x(\mu)
    \right]^T. 
\end{align*}
\label{low_rank_cor}
\end{corollary}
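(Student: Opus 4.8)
The plan is to show that the low-rank system \eqref{eq:lowrank_sys} is obtained from the full companion system \eqref{eq:Blinsys} by a change of variables that compresses the trailing blocks $v_i(\mu) = \frac{\mu^i}{i!}x(\mu)$ for $i \geq s$ down to their projections $\frac{\mu^i}{i!}V^T x(\mu)$, and that this compression is consistent with the reduced-rank structure of $\mathbf{B}_N$. By Theorem~\ref{solution}, a solution to \eqref{eq:Blinsys} has the form $v_i(\mu) = \frac{\mu^i}{i!}x(\mu)$, so the natural strategy is to verify directly that the proposed $\tilde{v}(\mu)$ together with the compressed matrix $\tilde{\mathbf{B}}_N$ satisfies \eqref{eq:lowrank_sys} block-row by block-row, rather than invoking an abstract similarity transformation.

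First I would partition both systems into three regimes: the top block row (the equation producing $-A(0)^{-1}b$), the \emph{full} block rows $2,\ldots,s$, and the \emph{compressed} block rows $s+1,\ldots,N+1$. For the top block row, the claim is that
\begin{align*}
  \mu\Big(\sum_{i=0}^{s-1} B_i \tfrac{\mu^i}{i!}x(\mu) + \sum_{i=s}^{N}\tilde{U}_i \tfrac{\mu^i}{i!}V^T x(\mu)\Big) - \tfrac{\mu^0}{0!}x(\mu) = -A(0)^{-1}b,
\end{align*}
and here I would use the defining identity $B_i = \tilde{U}_i V^T$ for $i \geq s$ to recognize $\tilde{U}_i \tfrac{\mu^i}{i!}V^T x(\mu) = B_i \tfrac{\mu^i}{i!}x(\mu)$, so the top row collapses exactly to the top row of \eqref{eq:Blinsys}, which holds by Theorem~\ref{solution}(b). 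The full block rows $2,\ldots,s$ are identical in the two systems and again reduce to the shift relation $\tfrac{1}{i}\cdot\tfrac{\mu^{i-1}}{(i-1)!}\mu\, x(\mu) = \tfrac{\mu^i}{i!}x(\mu)$, which is just the tautology used in the proof of Theorem~\ref{solution}.

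The main work is the transition row, block row $s+1$, which contains the factor $\tfrac{1}{s}V^T$: here the incoming unknown is the \emph{uncompressed} vector $\tfrac{\mu^{s-1}}{(s-1)!}x(\mu)$ but the outgoing one is the \emph{compressed} $\tfrac{\mu^s}{s!}V^T x(\mu)$, so I must check that applying $V^T$ to the shift relation and matching powers of $\mu$ yields precisely $\mu\cdot\tfrac{1}{s}V^T\cdot\tfrac{\mu^{s-1}}{(s-1)!}x(\mu) = \tfrac{\mu^s}{s!}V^T x(\mu)$. The remaining compressed rows $s+2,\ldots,N+1$ carry the $\tfrac{1}{i}I_p$ shift acting entirely within the projected coordinates $V^T x(\mu)$ and are verified identically to the full case but in $\CC^p$. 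I expect the main obstacle to be bookkeeping at this transition row: one must confirm that the single block entry $\tfrac{1}{s}V^T$ correctly mediates between the $n$-dimensional and $p$-dimensional parts and that no information needed downstream has been lost in the projection — that is, that the compressed subsystem is self-consistent precisely because only $V^T x(\mu)$, and never the full $x(\mu)$, reappears in rows $s+1$ onward. Once that consistency is established, stacking the verified block rows via a Kronecker-product argument analogous to the one in the proof of Theorem~\ref{solution} completes the claim.
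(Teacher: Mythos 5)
Your verification is correct, but it is organized differently from the paper's argument. The paper proves Corollary~\ref{low_rank_cor} in one line: it multiplies \eqref{eq:Blinsys} from the left by the block-diagonal compression matrix $\mathrm{diag}(I,\ldots,I,V^T,\ldots,V^T)$ (identities on the first $s$ blocks, $V^T$ on the rest), and observes that this operation turns the companion system into \eqref{eq:lowrank_sys}; the low-rank factorization $B_i=\tilde{U}_iV^T$ is what allows the first block row to be re-expressed in the compressed variables, and the shift rows commute trivially with the compression. Your route instead substitutes the candidate $\tilde{v}(\mu)$ into \eqref{eq:lowrank_sys} and checks every block row, which requires first invoking Theorem~\ref{solution}(a) to know that solutions of \eqref{eq:Blinsys} have the form $v_i(\mu)=\frac{\mu^i}{i!}x(\mu)$ with $x(\mu)$ satisfying \eqref{eq:ANrelation}. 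The two arguments contain the same algebra — your row-by-row checks, including the transition row $\mu\cdot\frac{1}{s}V^T\cdot\frac{\mu^{s-1}}{(s-1)!}x(\mu)=\frac{\mu^s}{s!}V^Tx(\mu)$, are exactly what the left-multiplication produces when unpacked — but the paper's formulation is more economical and makes the mechanism (compression commutes with the companion structure) visible without needing the explicit solution form. One small inaccuracy in your write-up: for the top block row you appeal to Theorem~\ref{solution}(b), but the fact you actually need is that the top row of \eqref{eq:Blinsys} holds by hypothesis together with part (a), which supplies both the solution structure and the identity $\big(\mu B_N(\mu)-I\big)x(\mu)=-A(0)^{-1}b$; part (b) is the converse direction and is not what justifies this step. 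Also, the closing appeal to a ``Kronecker-product argument'' is unnecessary — once each block row is verified, stacking them is immediate.
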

\begin{proof}
The equation \eqref{eq:lowrank_sys} is obtained directly by multiplying
\eqref{eq:Blinsys} from the left with the block diagonal matrix
\[
\begin{bmatrix}
I\\
&\ddots&\\
&&I\\
&&&V^T\\
&&&&\ddots& \\
&&&&&V^T
\end{bmatrix}.
\]
To show the converse is established by noting that the first block row in \eqref{eq:lowrank_sys} multiplied by $A(0)^{-1}$, is the equation \eqref{eq:ANrelation}.
\end{proof}

In every loop of Algorithm~\ref{alg:low_rank_inf_gmres}, we must compute a new matrix vector product to be used in the expansion of $Q_m$ and $\underline{H}_m$. In theory, we must multiply the nonzero block of the vector $q_m$ by the relevant parts of $\tilde{\mathbf{B}}_N$ in \eqref{eq:Bn-tilde} and orthogonalize the resulting nonzero block against $(q_1,\ldots,q_m)$. In practice, since we do not store an infinite tail of zeros, we compute the new vector $q_{m+1}$ as follows.

Let
\begin{align}
	y = \vectorize{(\tilde{x}, X_1 D_{s-1,s-1}}, \frac{1}{s}V^T X_2, X_3 \tilde{D}_{s+1,m}),
	\label{eq:y-lr}
\end{align}
where
\begin{equation}
	\tilde{x} = -(A_0)^{-1} \left( \sum_{i=1}^s \frac{1}{i} A_i x_i + \sum_{i=s+1}^m \frac{1}{i} U_i x_i \right),
	\numberthis
	\label{eq:x-tilde-lr}
\end{equation}
$A_i$ and $U_i$ according to \eqref{eq:A_reduced_rank},
\begin{subequations}
\begin{eqnarray}\label{eq.xs1-lr}
	\vectorize{(X_1)} &=& q_m \big(1:(s-1)n \big), \\
	X_2 &=& q_m \big( (s-1)n+1:sn \big), \\
	\vectorize{(X_3)} &=& q_m \big( sn+1:sn + p(m-s) \big),
\end{eqnarray}
\end{subequations}
with
\begin{align}
x_i = \begin{cases}
X_{1}(:,i) \in \CC^{n}& i=1,\ldots,s-1 \\
X_2 \in \CC^{n} & i = s \\
X_{3}(:(i-s)) \in \CC^{p} & i=s+1,s+2,\ldots,
\end{cases}
\numberthis
\label{eq:xs2-lr}
\end{align}
\begin{align}
\tilde{D}_{i,j} :=
	\begin{bmatrix}
	\frac{1}{i} &&& \\
	& \ddots && \\
	&& \ddots & \\
	&&& \frac{1}{j}
	\end{bmatrix} \in \RR^{(j-i+1) \times (j-i+1)}
	\numberthis
	\label{eq:D-tilde}
\end{align}
and $D_{s-1,s-1} \in \RR^{s-1 \times s-1}$ according to \eqref{eq:D-mat}. A (possibly repeated) Gram-Schmidt procedure follows to orthogonalize $y$ against $\{ q_1,\ldots,q_m\}$. The normalized version of this vector becomes $q_{m+1}$.

We note that since $A_0^{-1}$ does not change, we can compute the LU factorization before beginning the algorithm. In this way, the linear system can be solved efficiently in every iteration.
\begin{algorithm}
\SetAlgoLined
\SetKwInOut{Input}{input}\SetKwInOut{Output}{output}
\SetKwFor{FOR}{for}{do}{end}
\Input{$j$ the desired dimension of the Krylov subspace, \\ $A_i$, $i = 1,\ldots,j$ Taylor expansion coefficients of $A$, $b \in \CC^n$, \\ $\mu \in \CC \backslash \{ 0 \}$}
\Output{Approximate solution $x_j (\mu)$ of $A(\mu) x(\mu) = b$}
$\hat{c} = -A_0^{-1}b \in \CC^{n}$ \\  $Q_1 = \hat{c}/||\hat{c}||$ \\ $H_0 =$ \text{empty matrix}\\
\FOR{$m = 1,2, \dots, j$}{
    Let $\vectorize(X) = q_m \in \CC^{nm}$ \\
    Compute $\tilde{x}$ according to \eqref{eq:y-tilde} \\
    Compute $y = \vectorize{(\tilde{x},X D_{m+1,m})}$ according to \eqref{eq:D-mat}\\
    Expand $Q_m$ by $n$ rows of zeros \\
    Orthogonalize $y$ against $q_1, \dots, q_m$ by a Gram-Schmidt process: \label{alg:gs-line1} \\
    \nonl \Indp $h_m = Q_m^{T}y$ \\
    \nonl $y_{\perp} = y - Q_m h_m$ \\
    \Indm
        Possibly repeat Step \ref{alg:gs-line1} \\
    	Compute $\beta_m = ||y_{\perp}||$ \\
	Let $q_{m+1} = y_{\perp}/\beta_m$ \\
	Expand $Q_m$ into $Q_{m+1} = [Q_m, q_{m+1}]$ \\
	Let
	$\underline{H}_m =
    	\begin{bmatrix}
    	\underline{H}_{m-1} & h_m \\x
    	0 & \beta_m
    	\end{bmatrix} \in \CC^{(m+1) \times m}$
}
Return function handle (Theorem~\ref{solution})
\begin{align*}
    x_j (\mu): \mu \mapsto Q_j (1:n,:)  \argmin_{v \in \CC^j} \norm{ \left(\mu \underline{H}_j - \underline{I}_j \right) v - e_1 \norm{c} }
\end{align*}
where $\underline{I}_j$ is as in \eqref{eq:least-squares}
\caption{Infinite GMRES}
\label{alg:inf_gmres}
\end{algorithm}

\begin{algorithm}[ht]
\SetAlgoLined
\SetKwInOut{Input}{input}\SetKwInOut{Output}{output}
\SetKwFor{FOR}{for}{do}{end}
\Input{$j$ the desired dimension of the Krylov subspace, \\
$A_i$, $i=1,\ldots,s$, $U_i$, $i=s+1,\dots,j$ and $V^T$ as in \eqref{eq:A_reduced_rank}, \\ $b \in \CC^n$, $\mu \in \CC \backslash \{ 0 \}$}
\Output{Approximate solution $x_j (\mu)$ of $A(\mu) x(\mu) = b$}
\BlankLine
$\hat{c} = -A_0^{-1}b \in \CC^{n}$ \\  $Q_1 = \hat{c}/||\hat{c}||$ \\ $H_0 =$ empty matrix\\
\FOR{$m = 1,2, \dots, j$}{
    Compute $y$ using \eqref{eq:y-lr} $\textendash$ \eqref{eq:D-tilde} \\
    Expand $Q_m$ by: \\
    \nonl  \Indp $n$ rows of zeros if $m<s$\\
    \nonl  $p$ rows of zeros otherwise \\
    \Indm Orthogonalize $y$ against $q_1, \dots, q_m$ by a Gram-Schmidt process: \label{alg:gs-line} \\
    \nonl \Indp $h_m = Q_m^{T}y$ \\
    \nonl $y_{\perp} = y - Q_m h_m$ \\
        \Indm
        Possibly repeat Step \ref{alg:gs-line} \\
    	Compute $\beta_m = ||y_{\perp}||$ \\
	Let $q_{m+1} = y_{\perp}/\beta_m$ \\
	Expand $Q_m$ into $Q_{m+1} = [Q_m, q_{m+1}]$ \\
    Let
    $\underline{H}_m =
        \begin{bmatrix}
        \underline{H}_{m-1} & h_m \\
        0 & \beta_m
        \end{bmatrix} \in \CC^{(m+1) \times m}$}
Return function handle (Corollary \ref{low_rank_cor})
\begin{align*}
    x_j: \mu \mapsto Q_j (1:n,:)  \argmin_{v \in \CC^j} \norm{ \left(\mu \underline{H}_j - \underline{I}_j \right) v - e_1 \norm{c}}
\end{align*}
where $\underline{I}_j$ is in \eqref{eq:least-squares}
\caption{Low-Rank Infinite GMRES} \label{alg:low_rank_inf_gmres}
\end{algorithm}

The complete algorithm for low-rank infinite GMRES is included in Algorithm~\ref{alg:low_rank_inf_gmres}. An advantage of this method is that $Q_m$ grows only by $p$ rows after the first $s$ iterations, in contrast to Algorithm~\ref{alg:inf_gmres} where $Q_m$ grows by $n$ rows on each iteration.

\subsubsection{Tensor Infinite GMRES}\label{sect:tiar}
In \cite{TensorArnoldi}, a new method called tensor infinite Arnoldi (TIAR) was proposed. This method provided an equivalent factorization as in infinite Arnoldi method, but was improved in terms of memory and computation time.
Since this is essentially a memory efficient way to carry out
the infinite Arnoldi method, we can use this as  a drop-in replacement
in Algorithm~\ref{alg:inf_gmres}.

More precisely, Algorithm~\ref{alg:inf_gmres} and infinite Arnoldi method generate the same basis matrix for the Krylov subspace. Directly applying \cite[Lemma~3.1]{TensorArnoldi}, we can generate a tensor representation of the basis matrix instead of explicitly forming it, then convert all necessary operations to work on this factorization. 
We propose a new method which uses TIAR to form the Arnoldi factorization within Algorithm~\ref{alg:inf_gmres}, which we call Tensor infinite GMRES. This method is more efficient than Algorithm~\ref{alg:inf_gmres} in practice and leads to an equivalent solution.

\section{Convergence theory} \label{sec:convergence}
In the following we present a convergence characterization of
Algorithm~\ref{alg:inf_gmres}. The tensor variant in Section~\ref{sect:tiar}
is equivalent to Algorithm~\ref{alg:inf_gmres}  and convergence results for
the low-rank version Algorithm~\ref{alg:low_rank_inf_gmres}
can be derived analogously.

More precisely, we will now show that the convergence of Algorithm~\ref{alg:inf_gmres} can be described by the magnitude of the parameter $\mu$ and the smallest solutions to an associated NEP. Solutions to the NEP are given generally as $\lambda \in \CC$ s.t. $A(\lambda)v = 0$ for $A(\mu) \in \CC^{n \times n}$ and $v \in \CC^n \backslash \{0 \}$. Essentially, the reciprocal eigenvalues of the NEP play the same role eigenvalues play for the convergence of standard GMRES.

GMRES is a Krylov subspace method with a finite termination for which the residual vectors satisfy the following for $C \in \CC^{n \times n}$ nonsingular, $b \in \CC^n$,
\begin{align}
    \norm{r_k} = \min_ {x \in \mathcal{K}_k(C,b)} \norm{Cx-b} = \min_{p \in \mathcal{P}_k^0} \norm{p(C) b},
    \label{eq:residual}
\end{align}
where
\begin{align*}
	\mathcal{P}_k^0 = \{ \text{polynomials } p \text{ of degree } \leq k \text{ with } p(0)=1 \}.
\end{align*}

Various specializations of the min-max bound
\eqref{eq:residual}
can be found in the literature, see e.g., the summary \cite{Strakos:2012:KRYLOV}. We specialize a bound based on the $j+1$ largest eigenvalues of the matrix $A$,
where $j$ is a free parameter, typically
the number of outlier eigenvalues.
When $j=0$, this leads to the standard bound involving
the largest eigenvalue.

We need the following theorem, which is a generalization of Gelfand's Theorem\footnote{For any matrix norm $||\cdot||$, $\rho(A) = \lim_{k \rightarrow \infty} ||A^k||^{1/k}$.}.
\begin{theorem} \label{the:want_to_show}
Suppose $A \in \mathbb{C}^{n \times n}$. Assume $\gamma_1, \dots, \gamma_j$ are the $j$ largest eigenvalues and that they are semi-simple. Moreover, assume $|\gamma_i| < \min (|\gamma_1|, \dots, |\gamma_j|)$ for $i=j+1, \dots, n$.

Then,
\begin{align*}
    \lim_{k \xrightarrow{} \infty} ||(A - \gamma_1 I)(A - \gamma_2 I) \dots (A - \gamma_j I) A^k||^{\frac{1}{k}} = |\gamma_{j+1}|.
\end{align*}
\end{theorem}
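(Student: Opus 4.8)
The plan is to reduce the statement to the classical Gelfand formula by peeling off the top $j$ eigenvalues with a spectral projection. Write $q(z) := (z-\gamma_1)(z-\gamma_2)\cdots(z-\gamma_j)$, so that the quantity of interest is $\|q(A)A^k\|^{1/k}$. Let $P$ denote the spectral projection onto the sum of eigenspaces $\bigoplus_{i=1}^{j}\ker(A-\gamma_i I)$ associated with the $j$ dominant eigenvalues, let $P_i$ be the projection onto $\ker(A-\gamma_i I)$, and set $Q := I-P$. Both $P$ and $Q$ commute with $A$, and $W := \operatorname{ran} Q$ is an $A$-invariant subspace whose associated restriction $A_Q := A|_W$ has spectrum $\{\gamma_{j+1},\dots,\gamma_n\}$.

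The first, and decisive, step is to observe that semi-simplicity of $\gamma_1,\dots,\gamma_j$ forces $A$ to act as the scalar $\gamma_i I$ on each corresponding eigenspace, so that $q(A)P = \sum_{i=1}^{j} q(\gamma_i) P_i = 0$. Hence $q(A)A^k = q(A)(P+Q)A^k = q(A)\,Q A^k$, and since $Q$ commutes with $A$ and is idempotent, the right-hand side is entirely supported on the invariant subspace $W$; concretely it coincides with $q(A_Q)A_Q^k$ read as an operator on $W$ (up to the fixed inclusion and projection maps associated with $Q$, which do not affect the exponential growth rate). I would stress here that semi-simplicity is genuinely needed: a nontrivial Jordan block at a dominant $\gamma_i$ would leave a surviving term of size $|\gamma_i|^k$, and the limit would fail.

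For the upper bound I would use submultiplicativity, $\|q(A)A^k\| = \|q(A_Q)A_Q^k\| \le \|q(A_Q)\|\,\|A_Q^k\|$, together with Gelfand's formula applied to $A_Q$, whose spectral radius is $\rho(A_Q) = |\gamma_{j+1}|$ by the gap hypothesis $|\gamma_i| < \min(|\gamma_1|,\dots,|\gamma_j|)$ for $i>j$; this yields $\limsup_k \|q(A)A^k\|^{1/k} \le |\gamma_{j+1}|$. The matching lower bound is the only delicate point, because $q(A)$ is singular on all of $\mathbb{C}^n$ (it annihilates $\operatorname{ran} P$) and so cannot simply be cancelled. The resolution is to work on $W$: since every eigenvalue $\gamma_{j+1},\dots,\gamma_n$ of $A_Q$ has modulus strictly smaller than each $|\gamma_i|$, $i \le j$, none of them is a root of $q$, so $q(A_Q)$ is \emph{invertible} on $W$. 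Then $\|A_Q^k\| \le \|q(A_Q)^{-1}\|\,\|q(A_Q)A_Q^k\|$ and a second application of Gelfand to $A_Q$ give $\liminf_k \|q(A)A^k\|^{1/k} \ge |\gamma_{j+1}|$. Combining the two bounds proves the claim.

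The main obstacle is thus the lower bound: isolating the restriction $A_Q$ on the invariant complement $W$ and verifying that $q(A_Q)$ is invertible there is precisely what renders the fixed left factor harmless in the exponential-growth limit.
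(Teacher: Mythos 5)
Your proof is correct, and it is essentially the paper's argument expressed in coordinate-free language: where you split $\mathbb{C}^n$ via the spectral projections $P$ and $Q=I-P$, use semi-simplicity to get $q(A)P=0$, and invoke invertibility of $q(A_Q)$ on the complementary invariant subspace, the paper performs the identical steps through an explicit Jordan decomposition $A=V\,\mathrm{diag}(\gamma_1,\dots,\gamma_j,J)\,V^{-1}$, the vanishing diagonal entries of $\prod_{i}(\Gamma-\gamma_i I)$ in the dominant block, and the positivity of $\sigma_{\min}\bigl(\prod_{i}(J-\gamma_i I)\bigr)$. In both versions a two-sided sandwich with $k$-independent constants (your $\|Q\|$, $\|q(A_Q)\|$, $\|q(A_Q)^{-1}\|$; the paper's $\kappa(V)$, $\alpha$, $\beta$) disappears under the $k$-th root, and Gelfand's formula applied to the non-dominant part ($A_Q$, respectively $J$) yields the limit $|\gamma_{j+1}|$.
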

Although Gelfand's theorem is well-known in many variations, we have not
found this particular variant in the literature and therefore provide a proof of Theorem~\ref{the:want_to_show} in Appendix~\ref{appendix:proof1}.

Let $j \in \NN^+$ be as in Theorem~\ref{the:want_to_show} for $\mathbf{B}_N$ as in \eqref{comp_matrix}. We define
\begin{align*}
    q(z) := \frac{\left( \prod_{i=1}^j (z + 1 - \mu \gamma_i) \right)(z + 1)^{k-j}}{\prod_{i=1}^j (1 - \mu \gamma_i)},
\end{align*}
which can be viewed as a generalization of the Zarantonello polynomial \cite[pg 201]{IteratMSparseSys}.

With this specific $q$, the bound \eqref{eq:residual} can be simplified if we use
 the matrix $\mu \mathbf{B}_N - I$ as in \eqref{eq:Blinsys}  and the right-hand side vector $c$ as in \eqref{eq:RHS}.
%
We obtain the bound
\begin{align*}
    \norm{r_k} \leq \norm{q(\mu \mathbf{B}_N - I)c} \leq \norm{\frac{\left( \prod_{i=1}^j (\mu \mathbf{B}_N - \mu \gamma_i) \right)(\mu \mathbf{B}_N)^{k-j} }{\prod_{i=1}^j (1- \mu \gamma_i)}} \cdot \norm{c},
\end{align*}
and therefore also
\begin{align*}
    \norm{r_k}^{\frac{1}{k}} \leq
    \frac{\norm{\mu^k}^{\frac{1}{k}}}{\norm{\prod_{i=1}^j (1 - \mu \gamma_i)}^{\frac{1}{k}}} \cdot
    \norm{\left( \prod_{i=1}^j (\mathbf{B}_N - \gamma_i) \right)(\mathbf{B}_N)^{k}}^{\frac{1}{k}}
      \cdot \norm{\mathbf{B}_N^{-j}}^{\frac{1}{k}} \cdot \norm{c}^{\frac{1}{k}}.
\end{align*}
Using Theorem~\ref{the:want_to_show}, we have
\begin{align*}
    &\norm{r_k}^{\frac{1}{k}} \leq |\mu| |\gamma_{j+1}| \text{ as } k \rightarrow \infty,
\end{align*}
and consequently
\begin{align}\label{eq:rk_bound}
    \norm{r_k} \leq (|\mu| |\gamma_{j+1}|)^k,
\end{align}
for sufficiently large $k$.

From equation \eqref{eq:rk_bound} we
conclude that the convergence factor bound
is proportional to both $\mu$ and $\gamma_{j+1}$. The value of $\gamma_{j+1}$ can be further interpreted as follows. The $\mathbf{B}_N$-matrix is a companion matrix with eigenvalues $\gamma_i$, and its reciprocal eigenvalues are solutions to a PEP. More specifically, $\gamma_i = 1/\lambda_i$ where $\lambda_i$ are s.t.
\begin{align} \label{eq:our-nep}
	-A(0)^{-1} A_N(\lambda_i) v_i = 0,
 \end{align}
 where $v_i \in \CC^n \backslash \{ 0 \}$ and $A_N$ represents a truncated Taylor series expansion of $A$. Under the assumption that the eigenvalues converge (as a function of the truncation) the values $\lambda_1,\ldots,\lambda_j$ will converge to the $j$ smallest eigenvalues of the nonlinear eigenvalue problem. Therefore, the convergence of Algorithm~\ref{alg:inf_gmres} for \eqref{eq:Blinsys} is determined by the largest reciprocal  eigenvalues of the nonlinear eigenvalue problem, in the same way that the largest eigenvalues of a matrix describe the convergence of GMRES.

\section{Simulations} \label{sec:simulations}
\subsection{Time-delay system}\label{sec:TDS}
We provide reproducible simulations from different applications
illustrating properties of the methods. All simulations were carried
with a sustem with a 2.3 GHz Dual-Core Intel Core i5 processor and 16 GB RAM
using Julia \cite{Bezanson2017}. The software for the simuluations are
available online\footnote{\url{https://github.com/siobhanie/InfGMRES}}.

As a first illustration we consider the dynamical system with delays described by
\begin{subequations}\label{eq:TDS}
\begin{eqnarray}
	\dot{x}(t) &=& A_0 x(t) + A_1 x (t - \tau) - b u(t) \\
	y(t) &=& C^T x(t),
\end{eqnarray}
\end{subequations}
where $A_0, A_1 \in \CC^{n \times n}$. For simplicity
we assume that the entire state is the output, i.e., $C = I \in \CC^{n \times n}$.
The vector $b \in \CC^n$ is the external force,
$x(t) \in \CC^n$ is the state vector,
$u(t)$ is the input, $y(t)$ is the output and $\tau > 0$ is the delay. We assume without loss of generality $\tau = 1$.
In the context of systems and control this is usually referred
as a time-delay system, see standard references for time-delay
systems, e.g. \cite{Gu:2003:STABILITY,Michiels2011,Michiels:2007:STABILITYBOOK}.

The frequency domain formulation of \eqref{eq:TDS} relates
the input and the output as follows
\begin{align*}
	\hat{y}(\omega) = H(i \omega) \hat{u}(\omega)
\end{align*}
where
\begin{align}\label{eq:TDS2}
	H(s) = (-sI + A_0 + A_1 e^{-s})^{-1} b.
\end{align}
The matrix $H(s)$ is called the transfer function and can be obtained by applying the Laplace transform to the state equation under the condition $x(0)=\boldsymbol{0}$.
Note that \eqref{eq:TDS2} is a parameterized linear
system of the form \eqref{eq:our-problem}. We use our approach
to evaluate the transfer function for many $s$-values.

The eigenvalues $\lambda_i$ of the PEP given in \eqref{eq:our-nep} are the reciprocals of the eigenvalues $\gamma_i$ of the constant matrix $\mathbf{B}_N$ coming from the linearization of $A(\mu)$ (see Section~\ref{sec:introduction}).
Figure~\ref{fig:alleigvals}, we plot the solutions to the delay eigenvalue problem
\begin{equation}\label{eq:DEP}
 0=(-\lambda I+A_0+A_1e^{-\lambda})v
\end{equation}
closest to the origin (which were computing using the algorithm \cite{Jarlebring:2010:DELAYARNOLDI}).
Note that as $N\rightarrow\infty$ the eigenvalues of $\mathbf{B}_N$
approach the reciprocal eigenvalues of the delay eigenvalue problem.
For reference we plot also a circle centered around origin with radius $| \gamma_{j+1} |$, where $j=4$ and $\gamma_{j+1}$ is the non-outlier eigenvalue of largest magnitude, as described in Theorem~\ref{the:want_to_show}. A result of Cauchy's residue theorem and the principal of argument guarantees that within such a compact set, we have only a finite number of solutions to the nonlinear eigenvalue problem, unless $\mu = 0$ is a solution.

Figure~\ref{fig:con_figure1} and Figure~\ref{fig:con_figure2} show plots of iterations vs the norm of the relative residual when evaluating \eqref{eq:TDS2} with Algorithm~\ref{alg:inf_gmres}. We can see the convergence is proportional to $|\gamma_{j+1}|$ and $|\mu|$ with $j=4$, which illustrates the bound \eqref{eq:rk_bound}.

In Figure~\ref{fig:mu-vs-its} we show a plot of $\mu$ vs iterations to achieve a residual below $10^{-12}$ when evaluating \eqref{eq:TDS2}. We see that iterations required for convergence increases with $\mu$. Figure~\ref{fig:exp-conv} shows the observed and predicted convergence factors when evaluating \eqref{eq:TDS2} for different choices of $\mu$. The predicted convergence factor was calculated using the bound \eqref{eq:rk_bound} with $j=0$ outliers. The observed convergence factor measured by how much the residual decreased with each iteration of Algorithm~\ref{alg:inf_gmres} before convergence. In particular, we visualize the decrease at iteration $k$ which was least impressive, to represent a worst case scenario. More precisely, we plot $\mu$ vs $\rho$ where
\begin{align}\label{eq:rho}
\rho = \sup_{k} \frac{\norm{r_{k+1}}}{\norm{r_k}}.
\end{align}

\begin{figure}[ht]
    \begin{subfigure}[t]{0.45\columnwidth}
    \includegraphics{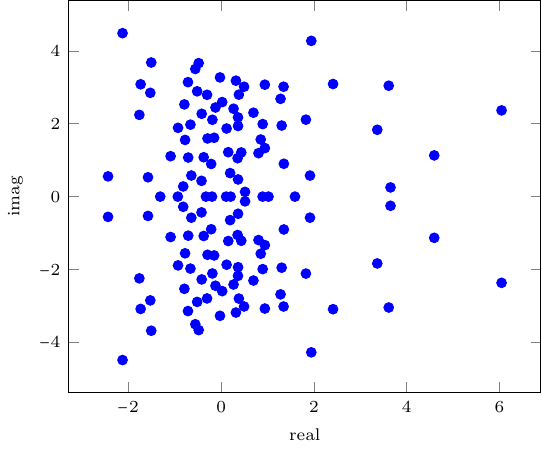}
    \caption{Solutions $\lambda$ to the delay eigenvalue problem \eqref{eq:DEP}}
    \label{fig:eigenvalues}
    \end{subfigure}
    \hfill
   \begin{subfigure}[t]{0.45\columnwidth}
     \includegraphics{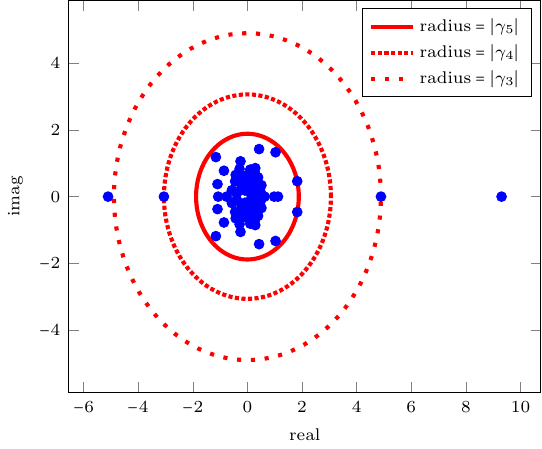}
    \caption{Reciprocal eigenvalues of the DEP \eqref{eq:DEP}, corresponding to the spectrum of $\mathbf{B}_N$ as $N\rightarrow\infty$}
    \label{fig:eigenvalues2}
    \end{subfigure}
    \caption{Solutions $\lambda_i$ to the delay eigenvalue problem
    and $\gamma_i = 1/\lambda_i$ for
    $n=100$\label{fig:alleigvals}}
\end{figure}

\begin{figure}[ht]
     \begin{subfigure}[b]{0.45\columnwidth}
      \includegraphics{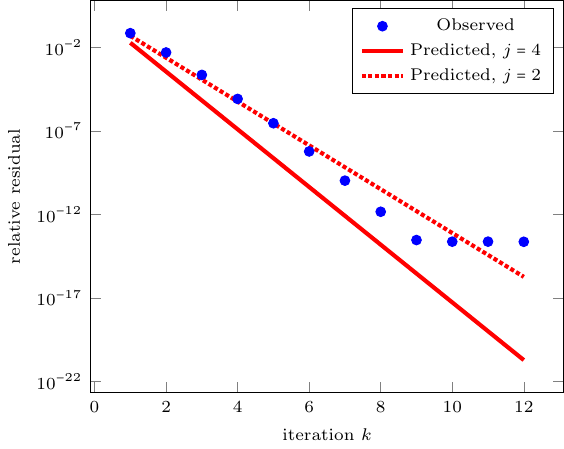}
     \caption{$\mu = .01$}
     \label{fig:con_figure1}
     \end{subfigure}
    \hfill
    \begin{subfigure}[b]{0.45\columnwidth}
     \includegraphics{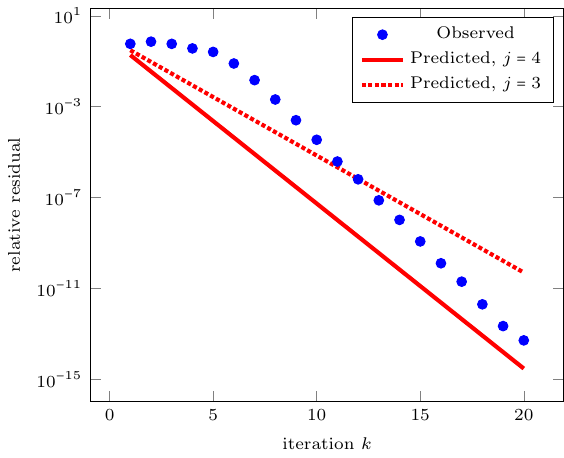}
    \caption{$\mu = .1$}
     \label{fig:con_figure2}
     \end{subfigure}
     \caption{Iterations vs norm of relative residual for evaluating \eqref{eq:TDS2} with Algorithm~\ref{alg:inf_gmres} plotted with bound predicted by \eqref{eq:rk_bound} with different choices outlier elimination $j$.
     Note that, by construction, the prediction is a bound only for sufficiently large $k$.}
\end{figure}

\begin{figure}[ht]
    \begin{subfigure}[t]{0.45\columnwidth}
     \includegraphics{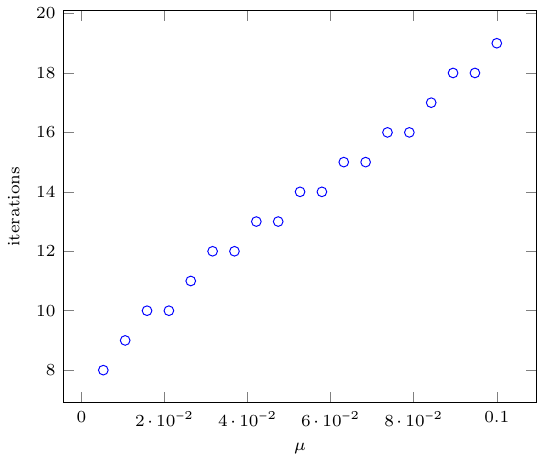}
    \caption{Iterations to achieve a relative residual below $10^{-12}$}
    \label{fig:mu-vs-its}
    \end{subfigure}
    \hfill
    \begin{subfigure}[t]{0.45\columnwidth}
     \includegraphics{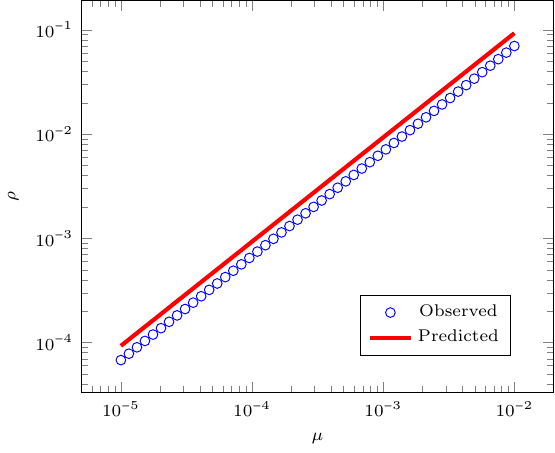}
    \caption{$\mu$ vs observed convergence factor $\rho$ as in \eqref{eq:rho} and predicted convergence factor as in \eqref{eq:rk_bound}}
    \label{fig:exp-conv}
    \end{subfigure}
    \caption{Convergence of Algorithm~\ref{alg:inf_gmres} for evaluating \eqref{eq:TDS2} for different values of $\mu$, $n=100$}
\end{figure}

\subsection{Absorbing boundary conditions}
The low-rank structure described in Section~\ref{sect:LR_infgmres}, arises
naturally from artificial boundary conditions similar to
how the structure arise for  NEPs \cite{doi:10.1002/nla.2043}.
Although the following technique is applicable for a wide class
of problems, for simplicity we illustrate for this specific boundary value problem:
\begin{subequations} \nonumber
\begin{eqnarray} \nonumber
   \bigg( \frac{\partial^2}{\partial x^2} + \big( 1 + \mu k(x) \big) ^2 + \beta (x) \bigg) u(x)  &=& h(x) \\\nonumber
   u(a) &=& 0 \\\nonumber
   u(c) &=& 0\nonumber
\end{eqnarray}
\end{subequations}
where
\begin{gather*}
    k(x) =
    \begin{cases}
    5+ \frac{10x}{b} \sin( \frac{\alpha \pi}{b} x) & x \in [0, \frac{b}{2}) \\
    5+10(1-\frac{x}{b}) \sin(\frac{\alpha \pi}{b} x)& x \in [\frac{b}{2},b) \\
    5 & x \geq b,
    \end{cases} \\
	\beta (x) =
	\begin{cases}
	\sin(\frac{x-a}{b-a} 2 \pi) & x \in [a,b) \\
	0 & x \geq b,
	\end{cases} \\
    h(x) =
    \begin{cases}
    \frac{(x-b)^2}{(a-b)^2} & x \in [a,b) \\
    0 & x  \geq b.
    \end{cases}
\end{gather*}
Plots of $k(x)$ and $h(x)$ follow in Figure~\ref{fig:potential} and Figure~\ref{fig:rhs} respectively. Note that $k,\beta,h$ are constant
in $[b,c]$ which allows us to do the following transformation.
We transform the problem on the interval $[b,c]$:
\begin{align*}
    \frac{d}{dx}
    \begin{bmatrix}
    u(x) \\
    u'(x)
    \end{bmatrix}
    =
    \begin{bmatrix}
    0 & 1 \\
    -(1 + \mu k(x))^2 - \beta(x) & 0
    \end{bmatrix}
    \begin{bmatrix}
    u(x) \\
    u'(x)
    \end{bmatrix}
    +
    \begin{bmatrix}
    0 \\
    h(x)
    \end{bmatrix}.
\end{align*}
Since $k(x) \equiv k_0$ where $k_0 = k(b)$, $\beta(x) \equiv 0$ and $h(x) \equiv 0$ on $[b,c]$, we can use the matrix exponential to solve the following differential equation on this interval, i.e.,
\begin{align*}
    \begin{bmatrix}
    u(x) \\
    u'(x)
    \end{bmatrix}
    =
    \text{exp} \left( (x-b)
    \begin{bmatrix}
    0 & 1 \\
    -(1 + k_0 \mu)^2 & 0
    \end{bmatrix} \right)
    \begin{bmatrix}
    u(b) \\
    u'(b)
    \end{bmatrix}.
\end{align*}
The boundary condition at $u(c)$ can be imposed as
\begin{align*}
    0 = u(c) =
    \begin{bmatrix}
    1 & 0
    \end{bmatrix}
    \begin{bmatrix}
    u(c) \\
    u'(c)
    \end{bmatrix}
    =
    \begin{bmatrix}
    1 & 0
    \end{bmatrix}
    \text{exp} \left( (c-b)
    \begin{bmatrix}
    0 & 1 \\
    -(1 + k_0 \mu)^2 & 0
    \end{bmatrix} \right)
    \begin{bmatrix}
    u(b) \\
    u'(b)
    \end{bmatrix}.
\end{align*}
We can compute the above matrix exponential using the formula for the matrix exponential of an antidiagonal two-by-two matrix. Thus, we obtain the relation
\begin{align}
    0 = g(\mu) u(b) + f(\mu) u'(b),
    \label{eq:boundary}
\end{align}
where
\begin{align*}
    g(\mu) &:= \cos \left( (c-b) (1 + k_0 \mu) \right) \\
    f(\mu) &:= \sin \left( (c-b) (1 + k_0 \mu) \right) \frac{1}{1 + k_0 \mu}.
\end{align*}

\begin{figure}[ht]
    \begin{subfigure}[t]{0.45\columnwidth}
     \includegraphics{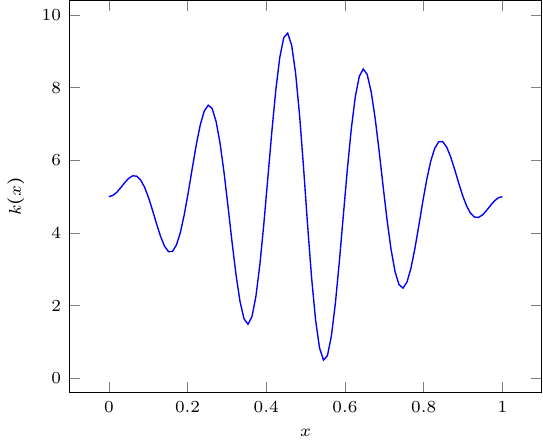}
    \caption{Potential function $k(x)$, $x \in [0,1]$}
    \label{fig:potential}
    \end{subfigure}
    \hfill
    \begin{subfigure}[t]{0.45\columnwidth}
     \includegraphics{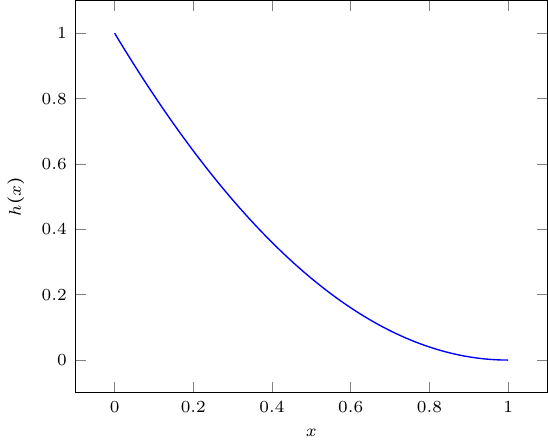}
    \caption{Right-hand side function $h(x)$, $x \in [0,1]$}
    \label{fig:rhs}
    \end{subfigure}
    \caption{Functions for the boundary value problem \eqref{eq:helmholtz_eq}}
\end{figure}

We note that a solution to the original boundary value problem will also satisfy \eqref{eq:boundary}. With this in mind, we split the domain into two parts and solve for $u(x)$ on $[a,b]$, i.e.,
\begin{subequations} \label{eq:helmholtz_eq}
\begin{eqnarray}
   \left( \frac{\partial^2}{\partial x^2} + \big( 1 + \mu k(x) \big)^2 + \beta(x) \right) u(x) &=& h(x) \\
   u(a) &=& 0 \\
   g(\mu) u(b) + f(\mu) u'(b) &=& 0,
\end{eqnarray}
\end{subequations}
a boundary value problem on the reduced domain with a Robin boundary condition at $x=b$. This is essentially a Dirichlet-to-Neumann map absorbing boundary condition, where the additional parameter appears in the operator, in this case as a scalar coefficient in the boundary condition.
The technique above can be seen as a special case of methods
in the field of artifical boundary condition.
See \cite{Appeloe2006,Berenger1994,HagstromRad} for literature on artificial boundary conditions. In the examples and plots which follow, we have used $[a,b) = [0,1)$, $[b,c] = [1,1.5]$ and $\alpha = 10$.

We discretize the problem as follows. Let $x_k = k \Delta x$, $k=1,\ldots,n$, and $\Delta x=1/n$ with $x_1 = \Delta x$ and $x_n = b$. To approximate the Robin boundary condition at $x=b$, we use a one-sided second order difference scheme, i.e.,
\begin{align*}
    0 = g(\mu) u(b) + f(\mu) \frac{1}{\Delta x} \big( \frac{3}{2}u(b) - 2u(x_{n-1}) + \frac{1}{2}u(x_{n-2}) \big) + \mathcal{O}(\Delta x^2).
\end{align*}
Thus, the discretized boundary value problem can be expressed as
\begin{align*}
	A_n(\mu) u_n(\mu) = h_n,
\end{align*}
where
\begin{align*}
    A_n(\mu) = D_n + K_n(\mu) + L_n + X_n F(\mu) Y_n^T,
\end{align*}
with
\begin{align*}
    D_n &=
    \frac{1}{\Delta x^2}
    \begin{bmatrix}
    -2 & 1 & & & \\
    1 & \ddots & \ddots & & \\
    & \ddots & \ddots & \ddots & \\
    & & 1 & -2 & 1 \\
    0 & \cdots & 0 & 0 & 0
    \end{bmatrix} \in \mathbb{R}^{n \times n},
\end{align*}
\begin{gather*}
    K_n(\mu) = \text{diag} \big( \big(1 + \mu k(x_1) \big)^2, \ldots, \big(1 + \mu k(x_{n-1}) \big)^2, 0 \big) \in \mathbb{R}^{n \times n}, \\
    L_n = \text{diag} \big(\beta(x_1),\ldots,\beta(x_{n-1}),0 \big) \in \mathbb{R}^{n \times n}
\end{gather*}
and
\begin{gather*}
	X_n = \left[ e_n, e_n \right]  \in \mathbb{R}^{n \times 2}, \\Y_n  = \left[ e_n, \big( \frac{3}{2 \Delta x}e_n - \frac{2}{\Delta x} e_{n-1} + \frac{1}{2 \Delta x} e_{n-2} \big) \right]  \in \mathbb{R}^{n \times 2}, \\
         F(\mu) =
	\begin{bmatrix}
	g(\mu) & \\
	& f(\mu)
	\end{bmatrix} \in \mathbb{R}^{2 \times 2}, \\
	h_n = \left[ h(x_1), \ldots, h(x_n) \right]^T \in \mathbb{R}^{n \times 1}.
\end{gather*}
We note that this corresponds to a discretization of $A(\mu)$ with
a low-rank structure as given in \eqref{eq:A_reduced_rank}, where $U_i = X_n F^{(i)}(0)$, $V=Y_n$ and $s=2$. Therefore, all three
proposed algorithms are applicable to this problem.

\begin{figure}[ht]
     \begin{subfigure}[t]{0.45\columnwidth}
          \includegraphics{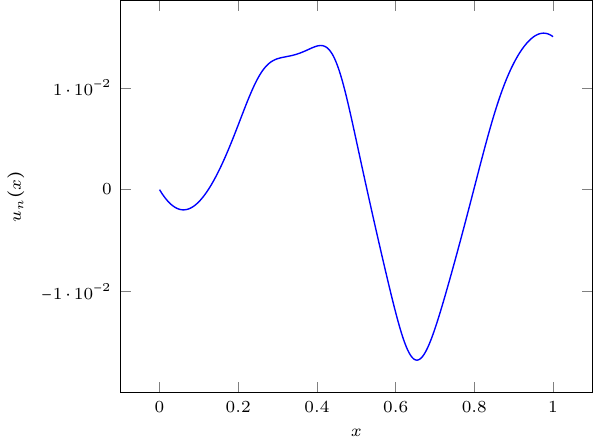}
         \caption{$\mu=1.6$}
         \label{fig:num-sol1}
       \end{subfigure}
       \hfill
     \begin{subfigure}[t]{0.45\columnwidth}
         \pgfplotsset{scaled y ticks=false}
          \includegraphics{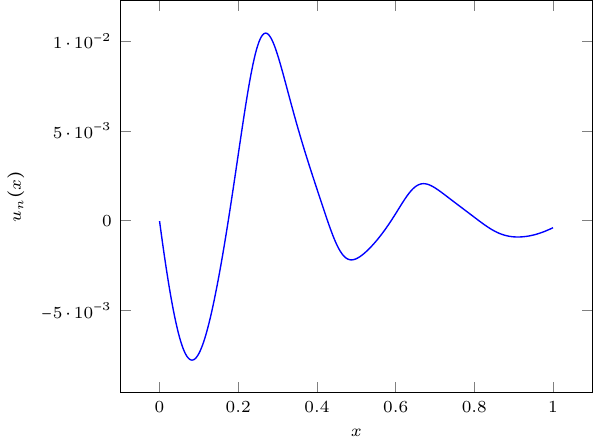}
         \caption{$\mu=2.5$}
         \label{fig:num-sol2}
     \end{subfigure}
     \caption{Numerical solution for \eqref{eq:helmholtz_eq}}
     \label{fig:num-sol-helm}
\end{figure}

\begin{figure}[ht]
     \begin{subfigure}[t]{0.45\columnwidth}
         \includegraphics{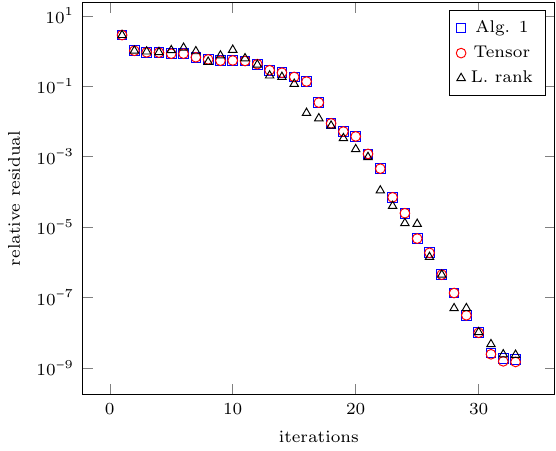}
        \caption{Convergence for \eqref{eq:helmholtz_eq}}
        \label{fig:conv1}
     \end{subfigure}
     \hfill
     \begin{subfigure}[t]{0.45\columnwidth}
         \includegraphics{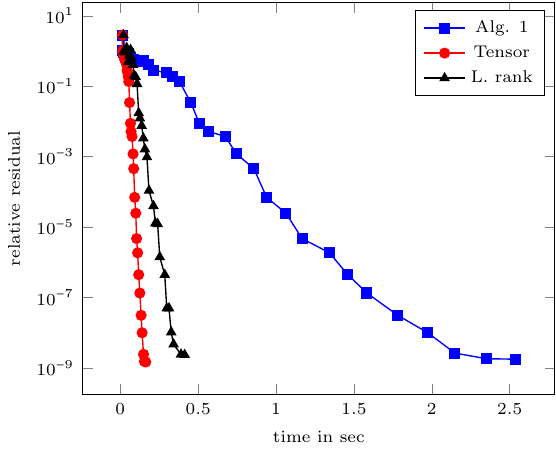}
        \caption{Simulation time for \eqref{eq:helmholtz_eq}}
         \label{fig:sim-time1}
     \end{subfigure}
    \caption{$\mu=1.6$, $n=5000$, condition number $\kappa(A_n(\mu)) = 5.621 \times 10^9$}
    \label{fig:sol-res-small}
\end{figure}

\begin{figure}[ht]
     \begin{subfigure}[b]{0.45\columnwidth}
         \includegraphics{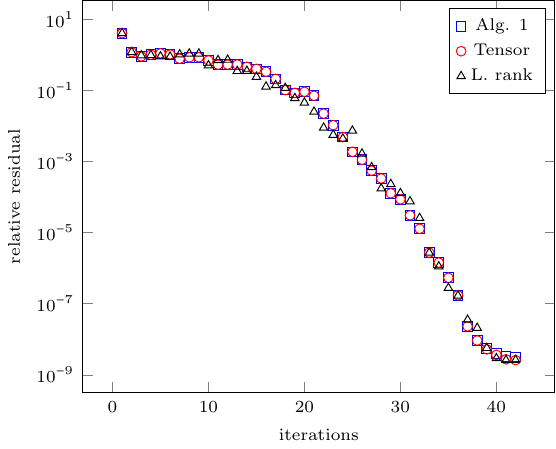}
        \caption{Convergence for \eqref{eq:helmholtz_eq}}
        \label{fig:conv2}
     \end{subfigure}
      \hfill
      \begin{subfigure}[b]{0.45\columnwidth}
         \includegraphics{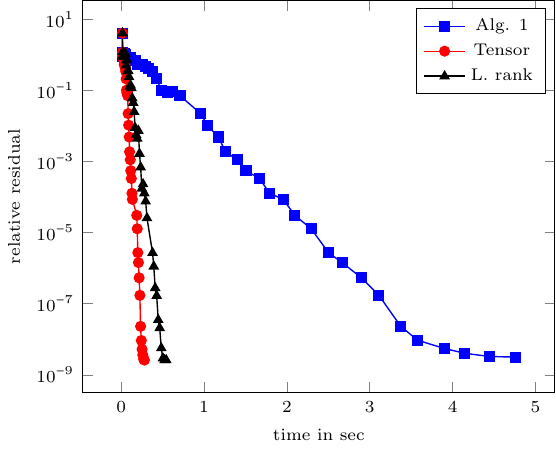}
        \caption{Simulation time for \eqref{eq:helmholtz_eq}}
        \label{fig:sim-time2}
     \end{subfigure}
    \caption{$\mu=2.5$, $n=5000$, condition number $\kappa(A_n(\mu)) = 2.398 \times 10^9$}
    \label{fig:sol-res-sim}
\end{figure}

In Figures~\ref{fig:num-sol1} and \ref{fig:num-sol2}, we see the numerical solution of the boundary value problem \eqref{eq:helmholtz_eq} on the domain $[a,b]$ for two different values of $\mu$, calculated with Algorithm~\ref{alg:low_rank_inf_gmres}. Figures~\ref{fig:conv1} and \ref{fig:conv2} show the convergence of Algorithm~\ref{alg:inf_gmres}, the tensor version of Algorithm~\ref{alg:inf_gmres} and Algorithm~\ref{alg:low_rank_inf_gmres} for solving \eqref{eq:helmholtz_eq}.
Figures~\ref{fig:sim-time1} and \ref{fig:sim-time2} specify the error as a function of CPU-time for a given $\mu$, although after one run of the algorithm we have access to the solution approximation for many different $\mu$. We see that the tensor version of Algorithm~\ref{alg:inf_gmres} and Algorithm~\ref{alg:low_rank_inf_gmres} offer an improvement in CPU-time over Algorithm~\ref{alg:inf_gmres}, especially for larger $\mu$.

\section{A finite element discretization of Helmholtz equation}
In order to illustrate the competitiveness of our approach we consider a Helmholtz equation with a parameter dependent material coefficient and using a discretization with the finite element software FEniCS \cite{Alnaes:2015:FENICS}. Specifically, we consider the following Helmholtz equation with a homogeneous Dirichlet boundary condition
\begin{subequations} \label{eq:helmholtz_2d}
\begin{alignat}{2}
   \left( \nabla^2 + f_1(\mu) \big( 1 + \mu k(x) \big) ^2 + f_2(\mu) \beta (x) \right) u(x) &= h(x) \qquad &&x \in \Omega \\
   u(x) &= 0 &&x \in \partial \Omega,
\end{alignat}
\end{subequations}
where $x=(x_1,x_2)$, $\Omega$ is as described on pp. 37-39 in \cite{Precondbook} and
\begin{gather*}
    k(x) =
    \begin{cases}
    1 + (x_1) \sin(\alpha \pi x_1) & x_1 \in [0, \frac{1}{2}) \\
    1 + (1 - x_1) \sin(\alpha \pi x_1) & x_1 \in [\frac{1}{2},1]
    \end{cases}, \\
    h(x) = e^{-\alpha x_1^2}, \text{ } \beta (x) = \sin(2 \pi x_1), \\
    f_1(\mu) = \mu, \text{ } f_2(\mu) = \sin(\mu).
\end{gather*}

\begin{figure}[ht]
     \begin{subfigure}[b]{0.45\columnwidth}
		\includegraphics{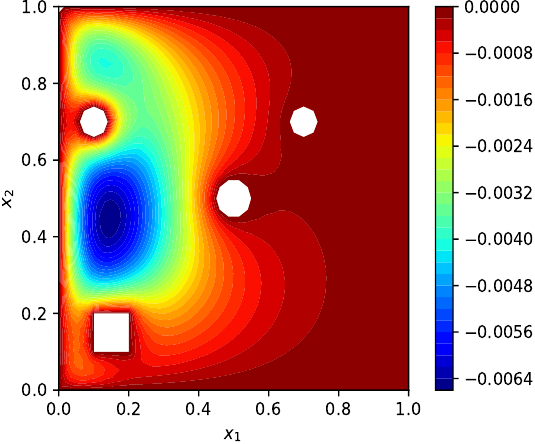}
     		\caption{$\mu=.1$}
     		\label{fig:2d-helm1}
     \end{subfigure}
     \hfill
     \begin{subfigure}[b]{0.45\columnwidth}
                \includegraphics{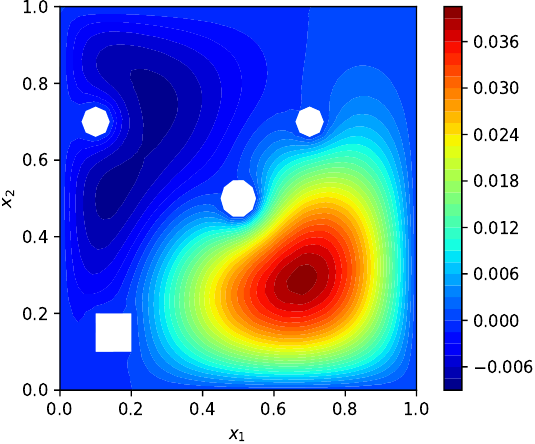}
	       \caption{$\mu=3$}
	       \label{fig:2d-helm2}
     \end{subfigure}
     \caption{Numerical solutions for \eqref{eq:helmholtz_2d}, $n=2436$}
     \label{fig:2d-plots}
\end{figure}

\begin{figure}
      \includegraphics{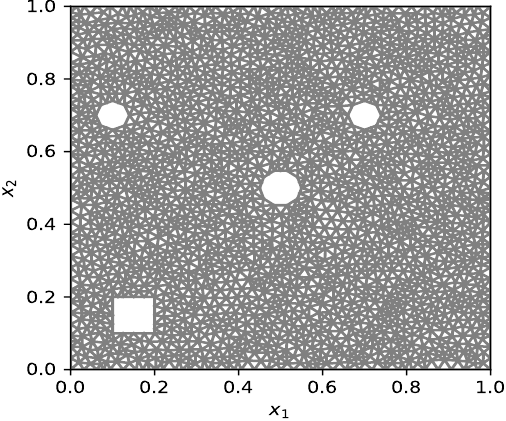}
      \caption{Mesh for FEM discretization, $n=2436$}
     \label{fig:2d-mesh}
\end{figure}

\begin{figure}[ht]
      \begin{subfigure}[b]{0.45\columnwidth}
	\includegraphics{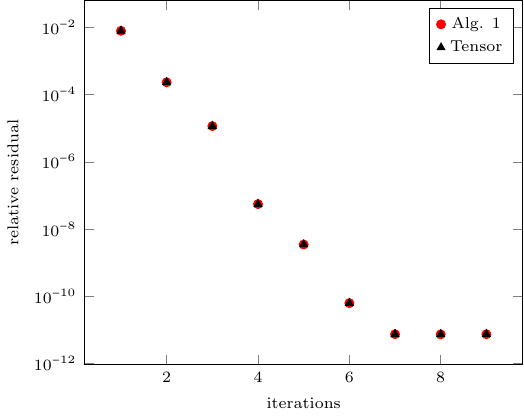}
        \caption{$\mu=.1$}
        \label{fig:2d-conv-1a}
     \end{subfigure}
     \hfill
	\begin{subfigure}[b]{0.45\columnwidth}
        \includegraphics{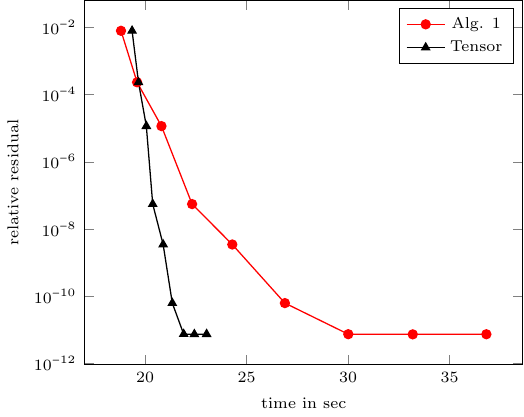}
        \caption{$\mu=.1$}
        \label{fig:2d-conv-1b}
     \end{subfigure}

      \begin{subfigure}[b]{0.45\columnwidth}
        \includegraphics{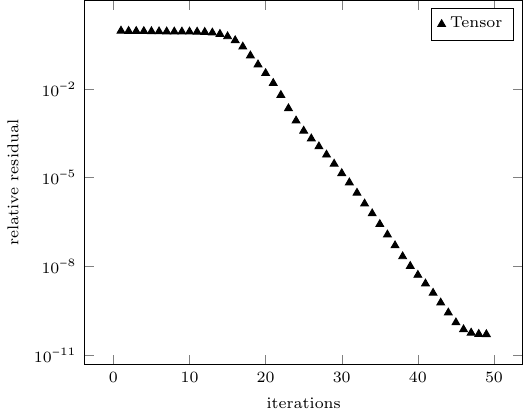}
        \caption{$\mu=3$}
        \label{fig:2d-conv-2a}
     \end{subfigure}
     \hfill
	\begin{subfigure}[b]{0.45\columnwidth}
        \includegraphics{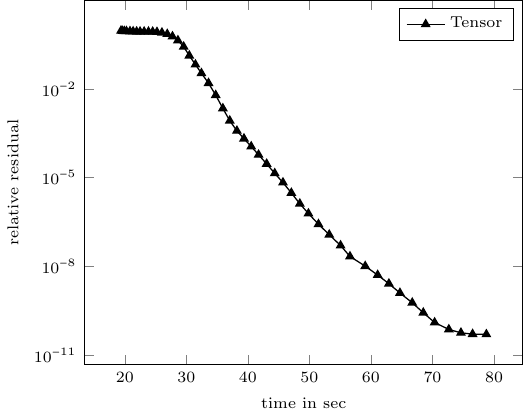}
        \caption{$\mu=3$}
        \label{fig:2d-conv-2b}
     \end{subfigure}
     \caption{Algorithm~\ref{alg:inf_gmres} and tensor variant applied to \eqref{eq:helmholtz_2d}, $n=741294$}
     \label{fig:2d-conv}
\end{figure}

Figures~\ref{fig:2d-helm1} and \ref{fig:2d-helm2} show the solutions to \eqref{eq:helmholtz_2d} on $\Omega$. We display the solution for two different values of $\mu$. We include also a plot of the mesh used to generate the finite element matrices needed for the linearization of this problem in Figure~\ref{fig:2d-mesh}. Figures~\ref{fig:2d-conv-1a}-\ref{fig:2d-conv-1b} show the performance of Algorithm~\ref{alg:inf_gmres} and its tensor version for solving the same problem for a small value of $\mu$. We see the benefit of the tensor variation in terms of time taken to build the Krylov basis matrix. Figures~\ref{fig:2d-conv-2a}-\ref{fig:2d-conv-2b} show the performance of the same problem with a larger value of $\mu$. We note that here we omit simulations with Algorithm~\ref{alg:inf_gmres} due to insufficient memory.

\section{Conclusion and outlook} \label{sec:conclusions}
The result of this paper is a new Krylov-subspace method to solve parameter-dependent systems of the form $A(\mu) x(\mu) = b$ for many values of $\mu$ simultaneously, where $A$ depends nonlinearly on $\mu$. We have constructed a companion linearization where $\mu$ appears only linearly and constructed a basis for the Krylov subspace in an efficient way without introducing truncation error. Numerical experiments verify the convergence of our methods is predicted by the magnitude of the parameter $\mu$ and the solutions to the corresponding NEP.

We have shown how to specialize this method to solve a specific discretized boundary value problem where the higher order terms in the Taylor series are of a certain form due to a Robin boundary condition on one end. In this way we have incorporated the structure of the problem into the design of the algorithm.

There are several variants of the infinite Arnoldi method, e.g. the Chebyshev version \cite{Jarlebring:2012:INFARNOLDI} and restarting variations \cite{Jarlebring:2014:SCHUR}. These strategies could be applied to the methods presented in this paper, but this would require adaptation based on the specific structure of the problem and further analysis.

\section*{Acknowledgements}
We are grateful for Prof. Tobias Damm, TU Kaiserslautern for providing crucial ideas for the proof of this version of Gelfand's lemma. We thank Prof. Kirk Soodhalter, Trinity College, for discussions of Krylov methods
for shifted linear systems.
\appendix

\section{Proof of Theorem~\ref{the:want_to_show}} \label{appendix:proof1}
Since $\gamma_1, \dots, \gamma_j$ are semi-simple, a Jordan decomposition can be expressed as
\begin{align*}
    A = V \text{diag}(\gamma_1, \gamma_2, \dots, \gamma_j, J) V^{-1} = V \Gamma V^{-1}.
\end{align*}
We have
\begin{align}
\centering
    \norm{(A - \gamma_1 I)(A - \gamma_2 I) \cdots (A - \gamma_j I) A^k} &\leq \kappa(V) \norm{(\Gamma - \gamma_1 I)(\Gamma - \gamma_2 I) \cdots (\Gamma - \gamma_j I) \Gamma^k} \label{first} \\
    \norm{(A - \gamma_1 I)(A - \gamma_2 I) \cdots (A - \gamma_j I) A^k} &\geq \frac{1}{\kappa(V)} \norm{(\Gamma - \gamma_1 I)(\Gamma - \gamma_2 I) \cdots (\Gamma - \gamma_j I) \Gamma^k}  \label{second}
\end{align}
where \eqref{second} follows from properties of singular values, i.e., $||B|| \geq \sigma_{\text{min}} (B)$.

Due to the placement of the zeros on the diagonals of the matrices $(\Gamma - \gamma_i I)$, $i=1,\dots,j$ and the upper triangular structure of all the matrices, we have the relation
\begin{align*}
    \left( \prod_{i=1}^j (\Gamma - \gamma_i I)\right) \Gamma^k &=
    \prod_{i=1}^j \left(
    \begin{bmatrix}
    \gamma_1 - \gamma_i & & & & \\
    & \gamma_2 - \gamma_i & & &  \\
    & & \ddots & & \\
    & & & \gamma_j -  \gamma_i & \\
    & & & & J - \gamma_i I \\
    \end{bmatrix} \right)
    \begin{bmatrix}
    \gamma_1^k & & & & \\
    & \gamma_2^k & & & \\
    & & \ddots & & \\
    & & & \gamma_j^k & \\
    & & & & J^k \\
    \end{bmatrix} \\
    &=
    \begin{bmatrix}
    0 & \\
    & \prod_{i=1}^j (J - \gamma_i I)
    \end{bmatrix}
    \begin{bmatrix}
    0 & \\
    & J^k
    \end{bmatrix}.
\end{align*}
Thus, we have
\begin{align*}
    \norm{ \left( \prod_{i=1}^j (\Gamma - \gamma_i I) \right) \Gamma^k }&=
    \norm{ \begin{bmatrix}
    0 & \\
    & \prod_{i=1}^j (J - \gamma_i I)
    \end{bmatrix}
    \begin{bmatrix}
    0 & \\
    & J^k
    \end{bmatrix} } \leq \alpha \norm{J^k}_2 \\
    \norm{ \left( \prod_{i=1}^j (\Gamma - \gamma_i I) \right) \Gamma^k } &=
    \norm{ \begin{bmatrix}
    0 & \\
    & \prod_{i=1}^j (J - \gamma_i I)
    \end{bmatrix} \begin{bmatrix}
    0 & \\
    & J^k
    \end{bmatrix} } \geq \beta \norm{J^k}_2,
\end{align*}
where
\begin{align*}
    \alpha =  \sigma_{\text{max}} \left( \prod_{i=1}^j (\Gamma - \gamma_i I) \right) > 0 \\
    \beta =  \sigma_{\text{min}} \left( \prod_{i=1}^j (\Gamma - \gamma_i I) \right) > 0.
\end{align*}
We note that we have $\alpha > 0$ since the matrix is non-zero. We have $\beta > 0$ since $\gamma_1, \dots, \gamma_j$ are not eigenvalues of $J$ by assumption, and therefore all singular values are positive (non-zero).

So, we have
\begin{align*}
    \norm{(A - \gamma_1 I)(A - \gamma_2 I) \cdots (A - \gamma_j I) A^k}^{\frac{1}{k}} &\leq (\kappa_2(V) \alpha)^{\frac{1}{k}} \norm{J^k}_2^{\frac{1}{k}} \xrightarrow{k \xrightarrow{} \infty} |\gamma_{j+1}| \\
    \norm{(A - \gamma_1 I)(A - \gamma_2 I) \cdots (A - \gamma_j I) A^k}^{\frac{1}{k}} &\geq \bigg(\frac{\beta}{\kappa_2(V)}\bigg)^{\frac{1}{k}} \norm{J^k}_2^{\frac{1}{k}} \xrightarrow{k \xrightarrow{} \infty} |\gamma_{j+1}|,
\end{align*}
and thus
\begin{align*}
    \lim_{k \xrightarrow{} \infty} \norm{(A - \gamma_1 I)(A - \gamma_2 I) \cdots (A - \gamma_j I) A^k}^{\frac{1}{k}} = |\gamma_{j+1}|.
\end{align*}

\bibliographystyle{plain}
\bibliography{siobhanbib,eliasbib}

\end{document}